\documentclass[11pt,a4paper]{amsart}
\usepackage[text={16cm,24.5cm}]{geometry}

\usepackage{amsfonts, amsmath, amssymb, amsthm}
\usepackage[mathscr]{eucal}
\usepackage{hyperref}

\usepackage{graphicx,color}

\usepackage{booktabs}
\usepackage{bbm}
\usepackage[latin1]{inputenc}
\usepackage{psfrag}
\usepackage{overpic}

\newcommand{\NN}{{\mathbb{N}}}
\newcommand{\QQ}{{\mathbb{Q}}}
\newcommand{\ZZ}{{\mathbb{Z}}}
\newcommand{\RR}{{\mathbb{R}}}
\newcommand{\cH}{{\mathcal{H}}}
\newcommand{\cT}{{\mathcal{T}}}
\newcommand{\cZ}{{\mathcal{Z}}}
\newcommand{\vones}{(1,1,\dots,1)}
\newcommand{\TA}{{\mathbb{TA}}}
\newcommand{\trop}{\mathrm{trop}}
\newcommand{\Assoc}{\mathrm{Assoc}}
\newcommand{\SetOf}[2]{\{#1\vphantom{#2} \mid \vphantom{#1}#2\}}
\newcommand{\SetOfbig}[2]{\bigl\{#1\vphantom{#2} \mid \vphantom{#1}#2\bigr\}}

\newcommand{\conv}{\operatorname{conv}}
\newcommand{\tconv}{\operatorname{tconv}}
\newcommand{\val}{\operatorname{val}}
\newcommand{\tdet}{\operatorname{tdet}}
\newcommand{\pos}{\operatorname{pos}}
\newcommand{\interior}{\operatorname{int}}
\newcommand{\core}{\operatorname{core}}
\newcommand{\lk}{\operatorname{lk}}
\newcommand{\st}{\operatorname{st}}
\newcommand{\type}{\operatorname{type}}
\newcommand{\Sym}{\operatorname{Sym}}

\newcommand\valid[1]{\overline{X_{#1}}\cap (v_{#1}+\bar{S}_{#1})}
\newtheorem{theorem}{Theorem}
\newtheorem{proposition}[theorem]{Proposition}

\newtheorem{lemma}[theorem]{Lemma}

\newtheorem{example}[theorem]{Example}

\newtheorem{remark}[theorem]{Remark}
\begingroup
\theoremstyle{plain}
\newtheorem{proposition_plain}[theorem]{Proposition}
\endgroup

\graphicspath{ {Pictures/} }

\begin{document}

\title[Tropical and Ordinary Convexity]{Tropical and Ordinary Convexity Combined}

\author[Joswig and Kulas]{Michael Joswig \and Katja Kulas}
\thanks{Michael Joswig is partially supported by DFG as a member of Research Unit ``Polyhedral Surfaces''.}
\address{Fachbereich Mathematik, TU Darmstadt, 64289 Darmstadt, Germany}
\email{\{joswig,kulas\}@mathematik.tu-darmstadt.de}

\date{\today}

\begin{abstract}
  A polytrope is a tropical polytope which at the same time is convex
  in the ordinary sense.  A $d$-dimensional polytrope turns out to be
  a tropical simplex, that is, it is the tropical convex hull of $d+1$
  points.  This statement is equivalent to the known fact that the
  Segre product of two full polynomial rings (over some field $K$) has
  the Gorenstein property if and only if the factors are generated by
  the same number of indeterminates.  The combinatorial types of
  polytropes up to dimension three are classified.
\end{abstract}

\maketitle

\section{Introduction}

In \cite{DevelinSturmfels04} Develin and Sturmfels defined tropical
polytopes, and they showed that tropical polytopes, or rather
configurations of $n$ tropical points in the tropical affine space
$\TA^d$, are equivalent to regular subdivisions of the product of
simplices $\Delta_{n-1}\times\Delta_d$.  It is important that there is
a natural way to identify $\TA^d$ with $\RR^d$; this way it is
possible to carry geometric concepts from $\RR^d$ to $\TA^d$.  A key
result \cite{DevelinSturmfels04}, Theorem~15, says that each tropical
polytope comes naturally decomposed into ordinary polytopes which are
also convex in the tropical sense.  These objects are the topic of
this paper, and we call them \emph{polytropes}.

Each polytrope $P$ is a tropical simplex, that is, it is the tropical
convex hull of $d+1$ points, where $d$ is the dimension of~$P$.  It
turns out that this statement is equivalent to the known fact from
Commutative Algebra that the Segre product of two full polynomial
rings (over some field $K$) has the Gorenstein property if and only if
the factors are generated by the same number of indeterminates.

Polytropes are not new.  Postnikov and Stanley studied
\emph{deformations} of the Coxeter hyperplane arrangement of type
A$_d$, that is, arrangements of affine hyperplanes in $\RR^d$ with
normal vectors $e_i-e_j$ for $i\ne j$ \cite{PostnikovStanley00}; here
$e_1,e_2,\dots,e_d$ are the standard basis vectors of $\RR^d$.  Their
bounded cells are precisely the polytropes.  In a paper by Lam and
Postnikov \cite{LamPostnikov05} the same objects are called the
\emph{alcoved polytopes} of type A.~More recently, polytropes occurred
as the bounded intersections of apartments in Bruhat--Tits buildings
of type $\tilde{\mathrm{A}}_d$, see Keel and
Tevelev~\cite{KeelTevelev07} or Joswig, Sturmfels, and
Yu~\cite{JoswigSturmfelsYu07}, as the \emph{inversion domains} of
Alessandrini~\cite{Alessandrini07}, and as the \emph{max-plus definite
  closures} of Sergeev~\cite{Sergeev07}.  An additional motivation to
study polytropes comes from the fact that each tropical polytope $P$
has a canonical decomposition into polytropes.

The paper is structured as follows.  We begin with a short section
gathering the relevant facts about tropical polytopes.  Then we prove
our main result, and this section also contains more information about
the interplay between the tropical and the ordinary convexity of a
polytrope.  The subsequent section lists specific examples, among
which are the associahedra and order polytopes.  One application of
our main result is that it allows for a fairly efficient (compared
with other more obvious approaches) enumeration of all combinatorial
types of polytropes.  We sketch the procedure, and we report on our
complete classification of the $3$-dimensional polytropes.  The final
section deals with the relationship to Commutative Algebra mentioned
above.

We are indebted to Tim R\"omer and Bernd Sturmfels for valuable
discussions on the subject.  A first set of examples of polytropes was
computed by Edward D.~Kim, and we are grateful that he shared his
results with us.  Moreover, we would like to thank Serge{\u\i} Sergeev
for his comments on a preprint version of this paper.

\section{Tropical convexity}

This section is meant to collect basic facts about tropical convexity
and to fix the notation.

Defining \emph{tropical addition} $x\oplus y:=\min(x,y)$ and
\emph{tropical multiplication} $x\odot y:=x+y$ yields the
\emph{tropical semi-ring} $(\RR,\oplus,\odot)$.  Component-wise
tropical addition and \emph{tropical scalar multiplication}
\begin{equation*}
  \lambda \odot (\xi_0,\dots,\xi_d)  :=  (\lambda \odot
  \xi_1,\dots,\lambda \odot \xi_d)  = (\lambda+\xi_0,\dots,\lambda+\xi_d)
\end{equation*}
equips $\RR^{d+1}$ with a semi-module structure.  For
$x,y\in\RR^{d+1}$ we let
\begin{equation*} [x,y]_\trop := \SetOf{(\lambda \odot x) \oplus (\mu
    \odot y)}{\lambda,\mu \in \RR}
\end{equation*}
be the \emph{tropical line segment} between $x$ and $y$.  A subset of
$\RR^{d+1}$ is \emph{tropically convex} if it contains the tropical
line segment between any two of its points.  A direct computation
shows that if $S\subset\RR^{d+1}$ is tropically convex then $S$ is
closed under tropical scalar multiplication.  This leads to the
definition of the \emph{tropical affine space} as the quotient
semi-module
\begin{equation*}
  \TA^d  :=  \RR^{d+1} / (\RR\odot(0,\dots,0))   .
\end{equation*}

Note that $\TA^d$ was called ``tropical projective space'' in
\cite{DevelinSturmfels04}, \cite{Joswig05}, \cite{DevelinYu06}, and
\cite{JoswigSturmfelsYu07}.  Tropical convexity gives rise to the hull
operator $\tconv$.  A \emph{tropical polytope} is the tropical convex
hull of finitely many points in $\TA^d$.

Like an ordinary polytope each tropical polytope $P$ has a unique set
of generators which is minimal with respect to inclusion; these are
the \emph{tropical vertices} of $P$.

There are several natural ways to choose a representative coordinate
vector for a point in $\TA^d$.  For instance, in the coset
$x+(\RR\odot(0,\dots,0))$ there is a unique vector $c(x)\in\RR^{d+1}$
with non-negative coordinates such that at least one of them is zero;
we refer to $c(x)$ as the \emph{canonical coordinates} of $x\in\TA^d$.
Moreover, in the same coset there is also a unique vector
$(\xi_0,\dots,\xi_d)$ such that $\xi_0=0$.  Hence the map
\begin{equation}\label{eq:c_0}
  c_0   :   \TA^d \to \RR^d   ,  (\xi_0,\dots,\xi_d) \mapsto (\xi_1-\xi_0,\dots,\xi_d-\xi_0)
\end{equation}
is a bijection.  Often we will identify $\TA^d$ with $\RR^d$ via this
map. This is also sound from the topological point of view: The
maximum norm on $\RR^{d+1}$ induces a metric on $\TA^d$ and, in this
way, a natural topology; the map $c_0$ is a homeomorphism.

The \emph{tropical determinant} $\tdet M$ of a matrix $M=(\mu_{ij})\in
\RR^{(d+1)\times(d+1)}$ is given as
\begin{equation}\label{eq:tdet}
  \tdet M  :=  \bigoplus_{\sigma\in\Sym_{d+1}} \mu_{0,\sigma(0)}+\dots+\mu_{d,\sigma(d)}   ,
\end{equation}
where $\Sym_{d+1}$ denotes the symmetric group of degree $d+1$ acting
on the set $\{0,1,\dots,d\}$. In the literature this is also called
the ``min-plus permanent'' of $M$. The matrix
$M\in\RR^{(d+1)\times(d+1)}$ is \emph{tropically singular} if the
minimum in \eqref{eq:tdet} is attained at least twice.

The \emph{tropical hyperplane} $\cH_a$ defined by the \emph{tropical
  linear form} $a=(\alpha_0,\dots,\alpha_d)\in\RR^{d+1}$ is the set of
points $(\xi_0,\dots,\xi_d)\in\TA^d$ such that the minimum
\begin{equation*}
  (\alpha_0 \odot \xi_0) \oplus \dots \oplus (\alpha_d \odot \xi_d)
\end{equation*}
is attained at least twice.  The complement of a tropical hyperplane
in $\TA^d$ has exactly $d+1$ connected components, each of which is an
\emph{open sector}. A \emph{closed sector} is the topological closure
of an open sector.  The set
\begin{equation*}
  S_k  :=  \SetOfbig{(\xi_0,\dots,\xi_d)\in\TA^d}{\xi_k=0 \text{ and }
    \xi_i>0 \text{ for } i\ne k}   ,
\end{equation*}
for $0\le k \le d$, is the \emph{$k$-th open sector} of the tropical
hyperplane $\cZ$ in $\TA^d$ defined by the zero tropical linear form.
Its closure is
\begin{equation*}
  \bar S_k  :=  \SetOfbig{(\xi_0,\dots,\xi_d)\in\TA^d}{\xi_k=0 \text{ and
    } \xi_i\ge 0 \text{ for } i\ne k}   .
\end{equation*}
We also use the notation $\bar S_I:=\bigcup\SetOf{\bar S_i}{i\in I}$
for any set $I\subset\{0,\dots,d\}$.

If $a=(\alpha_0,\dots,\alpha_d)$ is an arbitrary tropical linear form
then the translates $-a+S_k$ for $0\le k\le d$ are the open sectors of
the tropical hyperplane $\cH_a$.  The point $-a$ is the unique point
contained in all closed sectors of $\cH_a$, and it is called the
\emph{apex} of $\cH_a$.  For each $I\subset\{0,1,\dots,d\}$ with $1\le
\# I \le d$ the set $-a+\bar S_I$ is the \emph{closed tropical
  halfspace} of $\cH_a$ of type $I$.  The tropical polytopes in
$\TA^d$ are exactly the bounded intersections of finitely many closed
tropical halfspaces; see \cite{Joswig05} and \cite{GaubertKatz09}.

The points $v_1,\dots,v_n\in\TA^d$ are in \emph{tropically general
  position} if the $n\times(d+1)$-matrix whose $i$-th row is $v_i$ has
no $k\times k$-submatrix which is tropically singular, for $2\le k\le
\min(n,d+1)$.

Note that the integral translates of the hyperplanes $x_i=x_j$ induce
a triangulation of $\RR^d=c_0(\TA^d)$; this is called the \emph{alcove
  triangulation} $\TA_\Delta^d$ of $\TA^d$ by Lam and
Postnikov~\cite{LamPostnikov05}.

A \emph{tropical $d$-simplex} in $\TA^d$ is the tropical convex hull
of $d+1$ points in $\TA^d$ which are not contained in the boundary of
a tropical halfspace; see Figure~\ref{fig:trop-simplex}.  It must be
stressed that the vertices of a tropical simplex are not necessarily
in tropically general position.  For example, see the first tropical
triangle in Figure~\ref{fig:trop-simplex}.

\begin{figure}[htb]
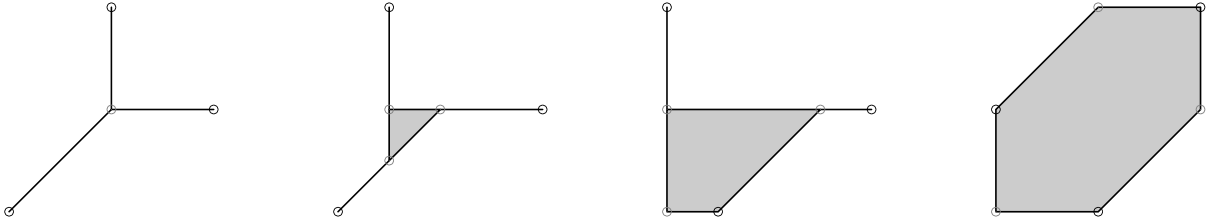


  \includegraphics[width=.18\textwidth]{trop-simplex.0}\hfill
  \includegraphics[width=.18\textwidth]{trop-simplex.1}\hfill
  \includegraphics[width=.18\textwidth]{trop-simplex.2}\hfill
  \includegraphics[width=.18\textwidth]{trop-simplex.3}

  \caption{Four tropical simplices in $\TA^2$ (with their tropical
    vertices drawn black).  The vertices of the first one are not in
    tropically general position.}
  \label{fig:trop-simplex}
\end{figure}

Let $V:=(v_1,\dots,v_n)$ be a sequence of points in $\TA^d$.  The
\emph{type} of $x\in\TA^d$ with respect to $V$ is the ordered
$(d+1)$-tuple $\type_V(x):=(T_0,\dots,T_d)$ where
\begin{equation*}
  T_k  :=  \SetOf{i\in\{1,\dots,n\}}{v_i\in x+\bar S_k}   .
\end{equation*}
For a given type $\cT$ with respect to $V$ the set
\begin{equation*}
  X_V(\cT)  :=  \SetOfbig{x\in\TA^d}{\type_V(x)=\cT}
\end{equation*}
is the \emph{cell} of type $\cT$ with respect to $V$.  With respect
to inclusion the types with respect to $V$ form a partially ordered
set.

\begin{figure}[htb]
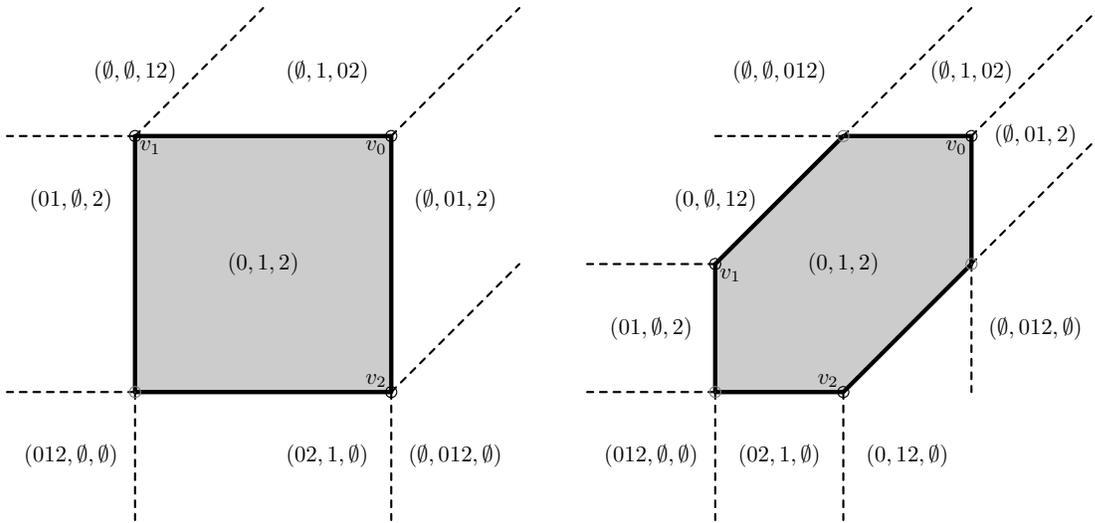

  \strut
  \hfill\includegraphics[height=.43\textwidth]{types.0} \hfill
  \includegraphics[height=.43\textwidth]{types.3}\hfill
  \strut

  \caption{Types and maximal cells with respect to two different
    triplets of points in $\TA^2$.}
  \label{fig:quad-hexagon-types}
\end{figure}

The symmetric group $\Sym(\{0,\dots,d\})$ acts on $\TA^d$ by permuting
the coordinates.  This operation fixes $\TA^d$, and it preserves
inclusion of sets as well as ordinary and tropical convexity.
Further, tropical hyperplanes are mapped to tropical hyperplanes, and
tropical halfspaces are mapped to tropical halfspaces.  This action
gives rise to a natural equivalence of point configurations: Two
sequences $V=(v_1,\dots,v_n)$ and $W=(w_1,\dots,w_n)$ of points in
$\TA^d$ are \emph{tropically equivalent} if there is a pair of
permutations
\begin{equation*}
  (\sigma,\tau)   \in   \Sym(\{1,\dots,n\}) \times \Sym(\{0,\dots,d\})
\end{equation*}
such that the map
\begin{equation*}
  (T_0,\dots,T_d)   \mapsto   (U_{\tau(0)},\dots,U_{\tau(d)}),
\end{equation*}
where $U_i=\sigma(T_i)$, is a poset isomorphism from the types with
respect to $V$ to the types with respect to~$W$.  Occasionally, it
will also be convenient to start the numbering of the vertices with
zero rather than one.

\begin{remark}\label{rem:lattice}
  A decisive difference to ordinary point configurations in $\RR^d$ is
  that each tropical point configuration in $\TA^d$ has a tropically
  equivalent realization with integral vertices.
\end{remark}

Two tropical polytopes are said to be \emph{tropically equivalent} if
their tropical vertices are tropically equivalent as point
configurations.  Figure~\ref{fig:quad-hexagon-types} shows two
tropical triangles which are not tropically equivalent.  Develin and
Sturmfels \cite{DevelinSturmfels04}, Theorem~1, showed that the
tropical equivalence classes of $n$ points in $\TA^d$ are dual to the
regular subdivisions of the product of simplices
$\Delta_{n-1}\times\Delta_d$.  By \cite{DevelinSturmfels04},
Proposition~24, the regular subdivision of
$\Delta_{n-1}\times\Delta_d$ dual to the point configuration $V$ is a
triangulation if and only if $V$ is in tropically general position.
Figure~6 in \cite{DevelinSturmfels04} shows all 35 tropical
equivalence classes of quadruples of points in $\TA^2$.

We will now discuss a link between tropical and ordinary convexity via
Puiseux series; for the general picture see Speyer and
Sturmfels~\cite{SpeyerSturmfels04}, Theorem~2.1, and
Markwig~\cite{Markwig07}.  Let $K=\RR((t^{1/\infty}))$ be the field of
Puiseux series with real coefficients.  It is known that $K$ is real
closed which is why its first order theory coincides with the first
order theory of the reals; see Salzmann et al.~\cite{SalzmannEtAl07},
\S64.24.  In particular, there are ordinary convex polytopes in $K^d$,
and they behave much like ordinary polytopes in $\RR^d$.  An element
of $K$ can be written as $f=\sum_{i\ge N} a_i t^{i/n}$ for some
$N\in\ZZ$ and $n\in\NN$.  In particular, if $f\ne 0$ there is a
minimal $d\in\ZZ$ such that $a_d\ne 0$.  We call $d/n$ the
\emph{(lower) degree} of $f$ and denote it by $\val f$. By setting
$\val(0)=\infty$ the map $\val:K\to\QQ\cup\{\infty\}$ is a
valuation. This gives rise to
\begin{equation*}
  \val   :   K^d   \to   (\QQ\cup\{\infty\})^d   , 
  (f_1,\dots,f_d)   \mapsto   (\val f_1,\dots,\val f_d)   .
\end{equation*}
In \cite{DevelinYu06}, Proposition~2.1, it is shown that each tropical
polytope $P$ in $\TA^d$ (identified with $\RR^d$ via the map $c_0$
from~\eqref{eq:c_0}) with rational coordinates arises as the image of
an ordinary convex polytope in $K^d$ under the map $\val$.  Each
element of the fiber will be called a \emph{Puiseux lifting} of
$P$. In the same way tropical hyperplanes are images of ordinary
hyperplanes, tropical halfspaces are images of ordinary halfspaces,
and tropical point configurations can be lifted to $K^d$.

\begin{lemma}\label{lem:d+1-facets}
  Let $P\subset\TA^d$ be the intersection of $d+1$ tropical
  halfspaces.  Then one of the following holds:
  \begin{enumerate}
  \item\label{it:d+1-facets:a} $P$ is unbounded or
  \item\label{it:d+1-facets:b} $P$ is contained in a tropical
    hyperplane or
  \item\label{it:d+1-facets:c} $P$ is a tropical simplex.
  \end{enumerate}
\end{lemma}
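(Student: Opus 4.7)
My plan is to prove this by lifting to the Puiseux series field $K=\RR((t^{1/\infty}))$ and reducing to classical polyhedral geometry over an ordered field. I assume (i) and (ii) both fail and aim to establish (iii).

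For each $j\in\{0,\dots,d\}$ I write the defining tropical halfspace as $H_j=-a_j+\bar S_{I_j}$ and, using the Puiseux lifting machinery recalled just before the lemma (cf.~\cite{DevelinYu06}, Proposition~2.1), I choose an ordinary closed halfspace $\tilde H_j\subset K^d$ with $\val(\tilde H_j)=H_j$, sufficiently generic within its val-class so that $\val(\tilde P)=P$ for $\tilde P:=\bigcap_{j=0}^d \tilde H_j$. If $\tilde P$ were unbounded, moving along a recession ray by a factor $s=t^{-n}$ would drive the valuation to $-\infty$, so $P$ would be unbounded, contradicting the failure of (i). If $\tilde P$ lay in an ordinary hyperplane $\tilde L\subset K^d$, then $P=\val(\tilde P)$ would lie in the tropical hyperplane $\val(\tilde L)$, contradicting the failure of (ii). Hence $\tilde P$ is a bounded, full-dimensional polyhedron in $K^d$ defined by $d+1$ linear inequalities. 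Over any ordered field, such a polyhedron must be an ordinary $d$-simplex---a standard fact provable by induction on $d$ or by a vertex/facet count---so $\tilde P=\conv(\tilde v_0,\dots,\tilde v_d)$ with $\tilde v_0,\dots,\tilde v_d$ affinely independent.

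Setting $v_i:=\val(\tilde v_i)\in\TA^d$, each $v_i$ lies in $P$, so $\tconv(v_0,\dots,v_d)\subseteq P$. For the reverse inclusion, I would invoke the standard tropicalization fact that the tropical vertices of the tropical polytope $P=\val(\tilde P)$ form a subset of $\{v_0,\dots,v_d\}$. On the other hand, a tropical polytope with at most $d$ tropical vertices in $\TA^d$ is contained in a tropical hyperplane, so the failure of (ii) forces $P$ to have at least $d+1$ tropical vertices. Combining, $P$ has exactly $d+1$ tropical vertices, namely $v_0,\dots,v_d$, and equals their tropical convex hull. Since $P$ is not contained in a tropical hyperplane, the $v_i$ are not jointly contained in the boundary of a common tropical halfspace, so $P$ is a tropical $d$-simplex.

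The principal obstacle I expect is the ``genericity'' step: verifying that a generic choice of Puiseux lift satisfies $\val(\tilde P)=P$ and induces the desired identification between the $d+1$ ordinary vertices of $\tilde P$ and the tropical vertices of $P$. Arbitrary lifts can fail these identities because of cancellations in Puiseux coordinates that enlarge the val-image beyond the naive tropical hull of the val-images of the $\tilde v_i$. The set of problematic lifts is cut out by finitely many algebraic conditions, so a perturbation argument should suffice, but making this precise is where the technical weight of the proof lies.
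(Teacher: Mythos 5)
Your approach is the same as the paper's: reduce to ordinary polyhedral geometry over the Puiseux series field $K$. The paper's proof is a two-sentence sketch, and you have fleshed out the details with the right overall shape. Two issues are worth flagging.

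First, a genuine gap you do not address: the valuation map $\val$ takes values in $\QQ\cup\{\infty\}$, so a tropical halfspace admits a Puiseux lift only when its apex has rational coordinates. The paper handles this explicitly by first treating the rational case and then perturbing to integer coordinates via Remark~\ref{rem:lattice} when the apices are irrational. Your writeup silently assumes a lift exists, which fails for irrational data; you would need to add this reduction step.

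Second, the intermediate claim ``a tropical polytope with at most $d$ tropical vertices in $\TA^d$ is contained in a tropical hyperplane'' is false. The tropical segment $\tconv\bigl((0,0,0),(0,1,2)\bigr)\subset\TA^2$ has two tropical vertices and, as a broken line with a bend at $(0,1,1)$, is not contained in any tropical line. Fortunately this claim is superfluous in your argument: once you know the tropical vertices of $P$ lie among $\{v_0,\dots,v_d\}$, you get $P\subseteq\tconv(v_0,\dots,v_d)$ directly without counting, and your final observation that the $v_i$ cannot all lie in the boundary of a common tropical halfspace (since a tropical halfspace's boundary sits inside the corresponding tropical hyperplane, which is tropically convex, and so would force $P$ into a tropical hyperplane, contradicting the failure of \eqref{it:d+1-facets:b}) completes the simplex condition. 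You are right that the genericity of the lift, ensuring $\val(\tilde P)=P$ and that $\val$ carries the vertices of $\tilde P$ onto a generating set of $P$, is where the real technical content lies; the paper waves this away in one sentence.
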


Clearly, the properties \eqref{it:d+1-facets:a} and
\eqref{it:d+1-facets:c} are mutually exclusive, while the two other
combinations can occur together.

\begin{proof}
  Let us first assume that the apices of the tropical hyperplanes have
  rational coordinates.  Then the properties above are inherited from
  ordinary convexity via a Puiseux lifting from $\TA^d$ to~$K^d$.  If
  the coordinates of the apices are irrational then we can perturb the
  situation to rational (or even integer) coordinates in view of
  Remark~\ref{rem:lattice}.
\end{proof}

\section{Polytropes}

A subset of $\TA^d$ is \emph{convex in the ordinary sense} if its image in
$\RR^d$ under the map $c_0$ as in \eqref{eq:c_0} is convex.  A
\emph{polytrope} is a tropical polytope which is also convex in the ordinary
sense.  In order to avoid confusion, we call the vertices of a polytrope, seen
as an ordinary polytope, its \emph{pseudo-vertices}.  A $d$-dimensional
polytrope $P$, or \emph{$d$-polytrope} for short, has exactly one bounded cell
of dimension~$d$ with respect to its vertices: its interior.  This is called
the \emph{basic cell}, and its type (with respect to the tropical vertices of
$P$) is the \emph{basic type} of~$P$.

\begin{remark}
  An ordinary polytope which additionally is tropically convex is not
  necessarily a polytrope: For example, the ordinary triangle
  $\conv\{(0,0),(2,1),(0,1)\}$ is tropically convex.  However, this is not a
  tropical polytope since it is not the tropical convex hull of any finite
  subset of $\TA^2$.
\end{remark}

In order to investigate polytropes any further it is useful to look at
the root systems of type A$_d$; see Bourbaki~\cite{Bourbaki02} for the
complete picture.  The relationship to polytropes is the following.
The root system of type A$_d$ consists of the $d(d+1)$ vectors
$e_i-e_j$ in $\RR^{d+1}$ with $0\le i,j\le d$ and $i\ne j$.  Call an
ordinary convex polyhedron whose (outer) facet normals (scaled to
Euclidean length $\sqrt{2}$) form a subset of those roots an
\emph{ordinary A$_d$-polyhedron}.  Since it contains the ray
$\RR(1,1,\dots,1)$ an ordinary A$_d$-polyhedron is always unbounded.
Its intersection with the coordinate hyperplane $x_0=0$ has facet
normals
\begin{equation}\label{eq:facet-normals}
  \pm e_i \text{ and }  e_i-e_j \quad \text{for }  1\le i,j\le d  \text{ and } i\ne j   .
\end{equation}
Moreover, the tropical hyperplanes in $\TA^d$ are formed from pieces of
ordinary affine hyperplanes with such normal vectors.  It then follows from
\cite[Lemma~10]{DevelinSturmfels04} that the polytropes are precisely the
intersections of ordinary A$_d$-polyhedra with the coordinate hyperplane
$x_0=0$.  The latter were called \emph{alcoved polytopes of type A} by Lam and
Postnikov~\cite{LamPostnikov05}.

\begin{example}
  The classification of polytropes is the topic of Section~\ref{sec:enum}
  below.  Here we list the result in the planar case $d=2$.  Up to tropical
  equivalence there are exactly five types of $2$-polytropes.  Considered as
  ordinary polygons, they have three, four, five, and six pseudo-vertices,
  respectively; see Figure~\ref{fig:polytropes2d}.
\end{example}

\begin{figure}[htb]
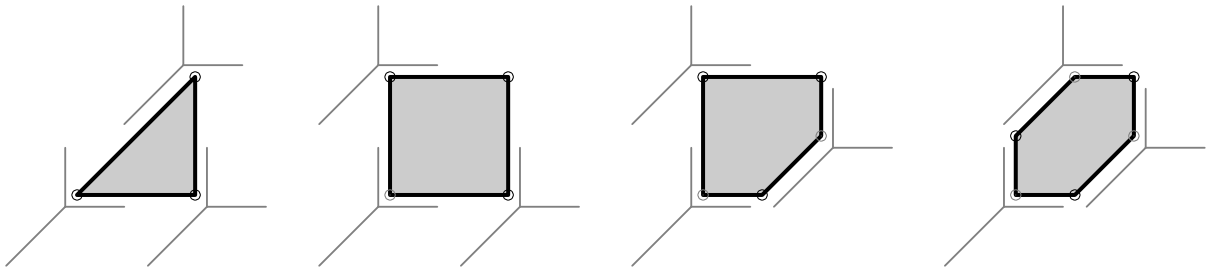

  \includegraphics[width=.22\textwidth]{polytropes2d.3}\hfill
  \includegraphics[width=.22\textwidth]{polytropes2d.4}\hfill
  \includegraphics[width=.22\textwidth]{polytropes2d.5}\hfill
  \includegraphics[width=.22\textwidth]{polytropes2d.6}

  \caption{Four types of polytropes in $\TA^2$.  The tropical vertices
    are black, and the pseudo-vertices are grey.  The sketches of
    tropical hyperplanes indicate the facet defining tropical
    halfspaces.}
  \label{fig:polytropes2d}
\end{figure}

\begin{proposition}\label{prop:ordinary-facet-upper-bound}
  Each $d$-polytrope has at most $d(d+1)$ ordinary facets, and this
  bound is sharp.
\end{proposition}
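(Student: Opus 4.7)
\emph{Proof plan.}
The upper bound is an immediate counting argument. By the characterization stated in the paragraph just preceding the statement, every polytrope in $\TA^d$ is an alcoved polytope of type A, so its ordinary facet normals lie in the finite set displayed in~\eqref{eq:facet-normals}. That set has cardinality $2d+d(d-1)=d(d+1)$, and since a convex polyhedron has at most one facet per outer normal direction, the bound $d(d+1)$ follows.

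For the sharpness part I would exhibit, for every $d\ge 1$, one explicit $d$-polytrope attaining the bound, namely
\[
  P_d \;:=\; \SetOfbig{x\in\RR^d}{|x_i|\le 1 \text{ for all } 1\le i\le d \text{ and } |x_i-x_j|\le 1 \text{ for all } 1\le i\ne j\le d}.
\]
Its defining inequality normals are precisely the vectors listed in~\eqref{eq:facet-normals}, so $P_d$ is an alcoved polytope of type A and hence a polytrope by the same characterization. The origin makes every defining inequality strict, so $P_d$ is full-dimensional.

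Realising $P_d$ as the slice $\xi_0=0$ of the A$_d$-polyhedron $\SetOfbig{\xi\in\RR^{d+1}}{|\xi_i-\xi_j|\le 1 \text{ for all } 0\le i\ne j\le d}$ shows that $P_d$ is invariant under the coordinate-permutation action of $\Sym(\{0,\dots,d\})$ on $\TA^d$. That action in turn is transitive on the $d(d+1)$ normal directions in~\eqref{eq:facet-normals}, since it corresponds to the Weyl-group action on the roots of type A$_d$. Hence, to conclude that each of these directions supports a genuine facet of $P_d$, it suffices to check a single representative inequality, say $x_1\le 1$. This reduces to exhibiting a relatively interior point of the slice $\{x\in P_d : x_1=1\}$, for instance $(1,\tfrac12,\dots,\tfrac12)$, which makes every other defining inequality strict.

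The upper bound carries essentially no content once the alcoved-polytope description is cited; the only step with actual content is the facet verification in the example, and the $\Sym(\{0,\dots,d\})$-symmetry collapses it to a single direct check, so I do not anticipate any genuine obstacle.
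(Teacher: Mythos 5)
Your proof is correct, and for the upper bound your argument coincides with the paper's (the facet normals lie among the $d(d+1)$ roots of type $\mathrm{A}_d$, and a polytope has at most one facet per outer normal). For sharpness, your witness $P_d$ is in fact literally the same polytope as the paper's $d$-pyrope $\Pi_d = \tconv(-e_0,\dots,-e_d)$, which the paper identifies with the zonotope $\conv\bigl([0,1]^d \cup [-1,0]^d\bigr)$; one checks, e.g.\ via the Minkowski-sum description $\Pi_d = [0,1]^d + [-\vones,0]$, that its irredundant inequality description is exactly $|x_i|\le 1$, $|x_i-x_j|\le 1$. The difference is only in how the facet count is justified: the paper asserts it by observing that $\Pi_d$ is a cubical zonotope with $d+1$ zones (so has $2\tbinom{d+1}{d-1}=d(d+1)$ facets), whereas you present the polytope by its $d(d+1)$ candidate inequalities and collapse the facet verification to a single case using the $\Sym(\{0,\dots,d\})$-action, which is transitive on the normal directions since it is the Weyl group acting on the $\mathrm{A}_d$ roots. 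Both are sound; yours is a bit more self-contained because it avoids invoking the facet formula for cubical zonotopes. One point you should state explicitly: to apply the alcoved-polytope characterization and conclude that $P_d$ is a polytrope, you need $P_d$ to be bounded, which is immediate from $P_d\subseteq[-1,1]^d$.
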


\begin{proof}
  The upper bound is clear since $d(d+1)$ is the number of roots of
  type A$_d$.  That this bound is sharp follows from the construction
  below.
\end{proof}

The maximum number of ordinary facets is attained, for instance, by
the \emph{$d$-pyrope}
\begin{equation}\label{eq:big-standard}
  \Pi_d  :=  \tconv(-e_0,-e_1,\dots,-e_d)   .
\end{equation}
The name is inspired by the fact that \emph{pyrope} is a mineral whose
structure as a pure crystal can take the form of a rhombic
dodecahedron, and the latter is combinatorially equivalent to $\Pi_3$
as an ordinary polytope; see Figure~\ref{fig:zono} for a picture.  The
chemical sum formula of pyrope is Mg$_3$Al$_2$(SiO$_4$)$_3$; see
Anthony et al.~\cite{pyrope} for the mineralogy facts.  In general,
$\Pi_d$ is a cubical zonotope with $d+1$ zones, which can be written
as $\conv ([0,1]^d\cup[-1,0]^d)$.  The number of its pseudo-vertices
equals $2^d-2$.

\begin{figure}[htb]
  \centering
  \includegraphics[width=.5\textwidth,clip=true]{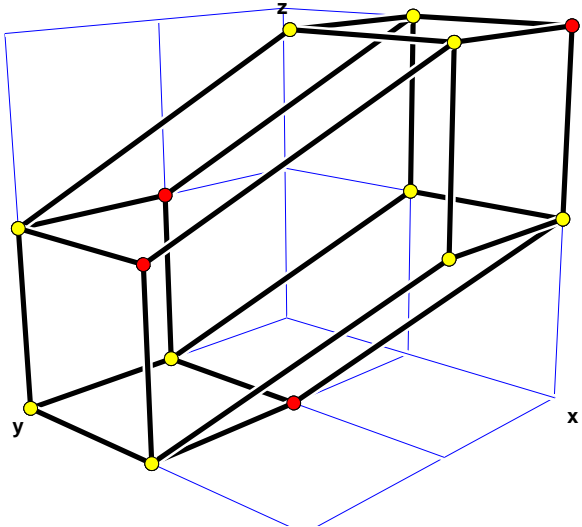}
  \caption{The pyrope $\Pi_3$ is a polytrope which, as an ordinary
    polytope, is a rhombic dodecahedron.}
  \label{fig:zono}
\end{figure}

To obtain the exact upper bound for the number of pseudo-vertices of a
polytrope is less trivial.  A class of polytropes attaining the upper
bound on the number of pseudo-vertices will be constructed in the next
section.

\begin{proposition_plain}
  (Gelfand, Graev, and Postnikov \cite{GelfandGraevPostnikov97},
  Theorem~2.3(2); Develin and Sturm\-fels~\cite{DevelinSturmfels04},
  Proposition~19).\label{prop:pseudo-vertex-upper-bound}
  Each $d$-polytrope has at most $\tbinom{2d}{d}$ pseudo-vertices, and
  this bound is sharp.
\end{proposition_plain}

The corresponding questions concerning the lower bounds are trivial:
The \emph{small tropical $d$-simplex}
\begin{equation}\label{eq:small-standard}
  \tconv(0,e_1,e_1+e_2,\dots,e_1+e_2+\dots+e_d)
\end{equation}
is also an ordinary simplex, hence the obvious lower bound of $d+1$
for the number of ordinary facets as well as for the number of
pseudo-vertices is actually attained.

Following \cite[Proposition 18]{DevelinSturmfels04} we are now going to
describe how to obtain the tropical vertices of a polytrope $P$ from an
ordinary inequality description.  As in \eqref{eq:facet-normals} we assume
that $P$ is the set of points in $\TA^d$, identified with $\RR^d$, satisfying
the inequalities
\begin{equation}\label{eq:P:ineq}
  x_i-x_j \le c_{ij} \quad \text{ for all } (i,j)\in J  ,
\end{equation}
where $J$ is a subset of $\SetOf{(i,j)}{i,j\in\{0,\dots,d\}, i\ne j}$
and $x_0=0$.  Since $P$ is bounded, the set of vectors
\begin{equation*}
  \SetOf{e_i-e_j}{(i,j)\in J} \cup \{\pm(1,1,\dots,1)\}
\end{equation*}
positively spans $\RR^{d+1}$.  The last two vectors do not correspond
to facet normals, but they make up for the fact that an ordinary
A$_d$-polyhedron is always unbounded.  We will construct a sequence
$V=(v_0,\dots,v_d)$ of $d+1$ points which will turn out to be the
tropical vertices of $P$.  The computation will be organized in a way
such that the basic type of $P$ with respect to $V$ is
$(0,1,\dots,d)$.  Each tropical vertex satisfies at least $d$ of the
inequalities \eqref{eq:P:ineq} with equality.  This is immediate from
the fact that each tropical vertex of $P$ is also a pseudo-vertex,
that is, an ordinary vertex of $P$.

First we may assume that each inequality in the description
\eqref{eq:P:ineq} is tight, that is, that the corresponding ordinary
affine hyperplane supports~$P$.  Second we may assume that each root
vector of type A$_d$ actually gives one inequality in the description
\eqref{eq:P:ineq}.  If this assumption is not initially given it can
explicitly be established as follows.  Let $(i,k)\not\in J$, that is,
the corresponding inequality is initially not given.  If
$(i,j_1),(j_1,j_2),\dots,(j_{m-1},j_m)$, $(j_m,k)$ are in $J$ then the
equation
\begin{equation*}
  x_i-x_k  =  x_i-x_{j_1}+x_{j_1}-x_{j_2}+x_{j_2}-\dots+
  x_{j_{m-1}}-x_{j_m}+x_{j_m}-x_k
\end{equation*}
leads to the definition
\begin{equation*}
  c_{ik}  :=  c_{ij_1}+c_{j_1j_2}+\dots+c_{j_{m-1}j_m}+c_{j_mk}   ,
\end{equation*}
and $x_i-x_k \le c_{ik}$ is a new tight inequality.  Iterating this
procedure gives all the inequalities desired since
$\SetOf{e_i-e_j}{(i,j)\in J} \cup \{\pm(1,1,\dots,1)\}$ positively
spans $\RR^{d+1}$.  Now the coordinates $(v_{i0},\dots,v_{id})$ of the
point $v_i$ are uniquely determined by setting $v_{i0}=0$ and $d(d+1)$
more equations.  An equivalent but more symmetric requirement is
\begin{equation}\label{eq:P:v}
  v_{ii} = 0 \quad \text{and} \quad v_{ik}  =  c_{ki} \quad \text{ for } i\ne k   .
\end{equation}
This computation is equivalent to the Floyd--Warshall algorithm for
computing all shortest paths in a directed graph \cite{CLRS01}.
Lemma~10 of \cite{DevelinSturmfels04} proves the following.

\begin{theorem}\label{thm:main}
  The $d+1$ points in the sequence $V=(v_0,v_1,\dots,v_d)$ defined in
  \eqref{eq:P:v} are the tropical vertices of the $d$-poly\-trope~$P$.
  In particular, each polytrope is a tropical simplex.
\end{theorem}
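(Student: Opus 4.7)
The plan is to verify, in order, that each $v_i$ lies in $P$, that $V=(v_0,\dots,v_d)$ tropically spans $P$, and that no proper subsequence of $V$ already spans $P$. Since a tropical polytope has a unique inclusion-minimal tropical generating set, namely its tropical vertices, this identifies $V$ with them, and the statement that $P$ is a tropical simplex is then just the definition.

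The algebraic engine for the first two steps is the triangle inequality $c_{ij}\le c_{ik}+c_{kj}$ (with $c_{ii}:=0$) on the completed system of constants, which is exactly the invariant enforced by the Floyd--Warshall iteration used in the construction. Granted this, $v_i\in P$ follows by a short case split on whether the indices of a prescribed inequality coincide with $i$, and the inclusion $P\subseteq\tconv(V)$ follows from the coordinate-wise identity
\[
x \;=\; \bigoplus_{k=0}^{d} x_k\odot v_k \qquad \text{for } x=(0,x_1,\dots,x_d)\in P,
\]
in which the $k=j$ summand contributes exactly $x_j$ while every other summand $x_k+c_{jk}$ dominates $x_j$ by the defining inequality of $P$. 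The reverse inclusion $\tconv(V)\subseteq P$ is automatic, since $P$ is tropically convex as an intersection of tropical halfspaces.

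The step I expect to be the main obstacle is minimality. I would argue by contradiction: if $v_i=\bigoplus_{k\ne i}\lambda_k\odot v_k$, then reading off the $i$-th coordinate forces $\lambda_k+c_{ik}\ge 0$ with equality for some $k^*$, while reading off each other coordinate forces $\lambda_j\ge c_{ji}$. Combining the two at $k^*$ yields $c_{ik^*}+c_{k^*i}\le 0$, and the triangle inequality gives the reverse, so equality holds. But then the two tight supporting hyperplanes $x_i-x_{k^*}=c_{ik^*}$ and $x_{k^*}-x_i=c_{k^*i}$ coincide as a single affine hyperplane which must contain all of $P$, contradicting $\dim P=d$. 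Hence $V$ is inclusion-minimal, completing the identification of the $v_i$ with the tropical vertices of~$P$.
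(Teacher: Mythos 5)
Your proof is correct, and it takes a genuinely different route from either argument the paper offers. The paper itself proves Theorem~\ref{thm:main} only by deferring to Lemma~10 of Develin--Sturmfels, and separately gives an alternate proof in the Gorenstein section which establishes \emph{only} the count $n=d+1$ (via the equivalence with $\Delta_{n-1}\times\Delta_d$ being a Gorenstein polytope), not the explicit identification of the $v_i$ with the tropical vertices. Your argument is self-contained and elementary: the triangle inequality $c_{ij}\le c_{ik}+c_{kj}$, $c_{ii}=0$ (which, as you note, is exactly the Floyd--Warshall invariant, and which also follows from the paper's standing assumption that every inequality in the completed system is tight) handles both $v_i\in P$ and the coordinatewise identity $x=\bigoplus_k x_k\odot v_k$ that gives $P\subseteq\tconv(V)$, and your minimality argument by contradiction is sound: from $v_i=\bigoplus_{k\ne i}\lambda_k\odot v_k$, the $i$-th and $k^*$-th coordinates force $c_{ik^*}+c_{k^*i}\le 0$, the triangle inequality forces equality, and then $P$ is squeezed into the single affine hyperplane $x_i-x_{k^*}=c_{ik^*}$, contradicting full-dimensionality. (A small remark: this last step does not actually need the hyperplanes to be \emph{supporting} --- validity of the two opposite inequalities already pins $P$ into the hyperplane.) What your approach buys is an explicit, purely combinatorial verification of the vertex formula \eqref{eq:P:v}; what the paper's alternate Gorenstein proof buys is the equivalence of the polytrope statement with the commutative-algebra statement about Segre products, which is the point of that section. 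Both are worth having.
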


\begin{example}
  We wish to give an example of how to compute the tropical vertices
  of a polytrope from an ordinary inequality description.  Let $P$ be
  the $2$-polytrope described by the inequalities $x_1\le 2$, $-x_1\le
  0$, $x_2\le 2$, $-x_2\le 0$, $x_1-x_2\le 1$; this looks like the
  third tropical triangle in Figure~\ref{fig:polytropes2d}, which is
  an ordinary pentagon.  All inequalities are tight.  The unique
  initially missing inequality corresponds to $e_2-e_1$.  We compute
  $x_2-x_1=x_2-x_0+x_0-x_1$ and
  \begin{equation*}
    c_{21}  =  c_{20} + c_{01}  =  2+0  =  2   .
  \end{equation*}
  Hence the missing inequality is $x_2-x_1\le 2$.  From this we infer
  that $v_0=(0,c_{10},c_{20})=(0,2,2)$,
  $v_1=(c_{01},0,c_{21})=(0,0,2)$, and
  $v_2=(c_{02},c_{12},0)=(0,1,0)$.  With respect to these generators
  the type of the basic cell reads $(0,1,2)$.
\end{example}

The tropical halfspaces containing a tropical polytope $P$ are
partially ordered by inclusion.  A tropical halfspace which is
minimal with respect to this partial ordering and which has the
additional property that its apex is a pseudo-vertex, is called
\emph{facet defining} for $P$.  It is known that $P$ is the
intersection of its (finitely many) facet defining tropical
halfspaces.  Notice that the proof in \cite[Theorem~3.6]{Joswig05}
uses \cite[Proposition~3.3]{Joswig05} which is wrong.  A corrected
statement is due to Gaubert and Katz
\cite[Proposition~1]{GaubertKatz09}, and this suffices to prove
\cite[Theorem~3.6]{Joswig05}; see also
\cite[Theorem~2]{GaubertKatz09}.  As in Lemma~\ref{lem:d+1-facets} one
can use Puiseux liftings to show that if $P$ is a full-dimensional
polytrope $P$ in $\TA^d$ it has exactly $d+1$ facet defining tropical
halfspaces.  Here we give a direct and constructive proof.

For an arbitrary sequence $V=(v_1,\dots,v_n)$ of points in $\TA^d$ and
$k\in\{0,\dots,d\}$ let
\begin{equation*}
  c_k(V) := ((-v_{1,k})\odot v_1) \oplus ((-v_{2,k})\odot v_2)\oplus \dots \oplus
  ((-v_{n,k})\odot v_n)
\end{equation*}
be the $k$-th \emph{corner} of $P=\tconv(V)$.  By construction each
corner belongs to the tropical convex hull $P$.  It is also obvious
that the \emph{cornered tropical halfspace} $c_k+\bar S_k$ contains
$P$.  Notice that the corners of $P$ do not depend on the choice of
the set of generators $V$.  We say that $P=\tconv(V)\subset\TA^d$ is
\emph{full-dimensional} if its dimension as an ordinary polytopal
complex in $\RR^d$ equals $d$.  Here we do not assume that $P$ is a
polytrope.

\begin{proposition}
  Suppose that $P$ is a full-dimensional tropical polytope.  Then the
  $d+1$ cornered tropical halfspaces are facet defining tropical
  halfspaces of~$P$.
\end{proposition}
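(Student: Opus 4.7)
The plan is to show for each $k$ that the cornered halfspace $c_k+\bar S_k$ (i) contains $P$, (ii) has its apex $c_k$ as a pseudo-vertex of $P$, and (iii) is minimal with respect to inclusion among tropical halfspaces containing $P$; properties~(ii) and~(iii) together are exactly the definition of \emph{facet defining}.

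For (i), writing $c_{k,j}=\min_i(v_{i,j}-v_{i,k})$ one sees that every generator $v_i$ satisfies $v_{i,j}-v_{i,k}\ge c_{k,j}$, hence $v_i\in c_k+\bar S_k$; tropical convexity of a halfspace propagates this to all of $P=\tconv(V)$. The same formula exhibits $c_k$ as a tropical linear combination of $v_1,\dots,v_n$ (with scalars $-v_{i,k}$), so $c_k\in P$.

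For (ii), in the projective coordinates of $\TA^d$ the halfspace $c_k+\bar S_k$ is cut out by the $d$ ordinary linear inequalities $y_j-y_k\ge c_{k,j}$, $j\ne k$. Their bounding hyperplanes have linearly independent normals $e_j-e_k$ in $\RR^{d+1}/\RR\vones$, and meet exactly at $c_k$. Since $P\subset c_k+\bar S_k$ and $c_k\in P$, each of these $d$ hyperplanes supports $P$ at $c_k$; their intersection being a single point makes $c_k$ an ordinary vertex of $P$, that is, a pseudo-vertex. Full-dimensionality of $P$ ensures that ``pseudo-vertex'' has its intended meaning.

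For (iii), assume $H=-a+\bar S_I$ satisfies $P\subset H\subset c_k+\bar S_k$. Considering rays $-a+tv$ as $t\to\infty$ for $v\in\bar S_I$, closedness of $c_k+\bar S_k$ forces $\bar S_I\subset\bar S_k$; since for any $i\ne k$ the sector $\bar S_i$ contains extremal rays in directions $e_j-e_i$ (with $e_0:=0$) that do not lie in $\bar S_k$, this pins down $I=\{k\}$. Then $c_k\in -a+\bar S_k$ and $-a\in c_k+\bar S_k$ translate into the two opposite inequalities $a_k-a_j\le c_{k,j}$ and $a_k-a_j\ge c_{k,j}$ for every $j\ne k$, giving $a+c_k\in\RR\vones$, hence $-a=c_k$ in $\TA^d$ and $H=c_k+\bar S_k$.

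The main obstacle is step~(iii): one must carefully track tropical halfspace inclusions in the quotient $\TA^d$ and rule out every sector type other than $I=\{k\}$ via the asymptotic/recession-cone argument; once that is done, the remaining apex comparison is a routine linear calculation.
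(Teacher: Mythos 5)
Your argument is correct and follows essentially the same route as the paper: you establish that the $d$ bounding hyperplanes of $c_k+\bar S_k$ meet only at $c_k$ and support $P$ there (hence $c_k$ is a pseudo-vertex), and then show minimality by forcing $I=\{k\}$ and comparing apices. One small improvement worth noting: in the minimality step you obtain $-a\in c_k+\bar S_k$ directly from $-a\in H\subset c_k+\bar S_k$, which avoids the paper's detour of reducing to a minimal competitor $w+\bar S_K$ and invoking the external fact that the apex of a minimal tropical halfspace lies in~$P$.
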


\begin{proof}
  The $k$-th corner $c_k=(c_{k0},\dots,c_{kd})$ is contained in the
  $d$ ordinary affine hyperplanes $x_k-x_l=c_{kk}-c_{kl}$ for all
  $l\in\{0,\dots,d\}\setminus\{k\}$.  The corresponding $d$ normal
  vectors $e_k-e_l$ are skew to the vector $\vones$, and hence they
  linearly span the quotient $\RR^d=\TA^d$.  Therefore, the
  intersection of these hyperplanes is a point.  This implies that
  $c_k$ is a vertex of the max-tropical hyperplane arrangement induced
  by $V$, which means that $c_k$ is a pseudo-vertex.

  Suppose that $c_k+\bar S_k$ is not minimal. Then there must be some
  other tropical halfspace $w + \bar S_K$ contained in $c_k + \bar
  S_k$ which still contains $P$.  Without loss of generality we can
  assume that $w + \bar S_K$ is minimal and thus $w\in P$.  Since
  $c_k+\bar S_k$ consists of a single closed sector it follows that
  $K=\{0\}$.  Moreover, since $c_k$ is contained in $P$, we have
  $c_k-w\in\bar S_k$.  However, we also have $w\in c_k+\bar S_k$ since
  $c_k+\bar S_k$ contains all points of $P$.  We conclude that
  $w=c_k$, and this proves that each cornered halfspace is facet
  defining.
\end{proof}

\begin{proposition}
  If $P$ is a polytrope then the cornered tropical halfspaces are
  the only facet defining tropical halfspaces of~$P$.
\end{proposition}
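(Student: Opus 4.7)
My plan is to show that every tropical halfspace containing the polytrope $P$ already contains one of the $d+1$ cornered halfspaces, so that the minimality built into the definition of ``facet defining'' forces any such halfspace to coincide with a cornered one.

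The main technical ingredient I would set up is the following containment criterion for cornered halfspaces: given a tropical halfspace $w+\bar S_I$ and an index $k\in I$ satisfying $(c_k)_k-w_k\le (c_k)_l-w_l$ for every $l\in I^c$, one has $c_k+\bar S_k\subseteq w+\bar S_I$. This is the step I expect to require the most care, since $\bar S_I$ encodes the disjunctive condition ``the minimum is attained somewhere in $I$'' and one has to track which coordinate is responsible as $y$ varies in $\bar S_k$. Writing $u:=c_k-w$, the $k$-th coordinate of $u+y$ equals $u_k$ for every $y\in\bar S_k$, so either the minimum of $u+y$ equals $u_k$ (in which case $k$ itself realizes it and $k\in I$), or the minimum is strictly less than $u_k$ and is therefore attained at some coordinate $l$ with $u_l<u_k$; the hypothesis $u_k\le u_l$ for $l\in I^c$ then forces $l\in I$.

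Granting the criterion, the main argument is short. Suppose $H=w+\bar S_I$ is a facet defining tropical halfspace of $P$. Pick any $x\in P$; since $x\in H$, there is an index $k\in I$ at which the minimum of $x-w$ is attained, and this choice yields $w_k-w_l\le x_k-x_l$ for every $l$. The previous proposition supplies $P\subseteq c_k+\bar S_k$, which translates into $x_k-x_l\le (c_k)_k-(c_k)_l$. Chaining these inequalities verifies the hypothesis of the criterion and so $c_k+\bar S_k\subseteq H$.

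Since $c_k+\bar S_k$ is itself a tropical halfspace containing $P$, the minimality of $H$ forces $H=c_k+\bar S_k$, and hence every facet defining halfspace of $P$ is cornered.
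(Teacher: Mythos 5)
Your strategy — show that every tropical halfspace $H$ containing $P$ already contains a cornered halfspace, then invoke minimality — is genuinely different from the paper's. The paper instead proves that the cornered hull $\bigcap_k(c_k+\bar S_k)$ is itself a polytrope and recovers $v_0,\dots,v_d$ as its tropical vertices via the Floyd--Warshall procedure \eqref{eq:P:v}, so the cornered hull coincides with $P$; the conclusion follows from that identification. Your containment criterion in the first paragraph is correct as stated, and it would be a clean lemma to have. The problem is that the verification of its hypothesis does not go through.

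There is a sign error and, independently, a gap in the chaining. From $x\in H$ with the minimum of $x-w$ attained at $k\in I$, the inequality is $(x-w)_k\le(x-w)_l$, i.e.\ $x_k-x_l\le w_k-w_l$, not $w_k-w_l\le x_k-x_l$ as you wrote. From $P\subseteq c_k+\bar S_k$ you correctly get $x_k-x_l\le (c_k)_k-(c_k)_l$. Now both inequalities bound $x_k-x_l$ from above by two different right-hand sides; this says nothing about how $w_k-w_l$ and $(c_k)_k-(c_k)_l$ compare, which is exactly what the criterion hypothesis $(c_k)_k-(c_k)_l\le w_k-w_l$ requires. (With the inequality written as in the proposal, the chaining yields $w_k-w_l\le(c_k)_k-(c_k)_l$, the \emph{reverse} of the criterion hypothesis, so the argument fails either way.) A further issue: the index $k$ depends on the chosen $x\in P$; as $x$ varies over $P$ the minimizing index varies over $I$, so ``pick any $x$'' does not pin down a single candidate $k$ for which the criterion must be verified. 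Finally, notice that the opposite inequality is actually forced: since the apex $w$ is a pseudo-vertex, $w\in P\subseteq c_k+\bar S_k$ gives $(c_k)_l-w_l\le(c_k)_k-w_k$ for all $l$, so your criterion hypothesis can only ever hold with equality for $l\in I^c$. Establishing that equality for some $k\in I$ is the real content, and the chaining step does not deliver it; it would require a genuinely different argument (for instance analysing which of the defining inequalities $x_k-x_l\le v_{lk}$ are tight at $w$, and exploiting that they must positively span a full-dimensional normal cone because $w$ is a vertex).
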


\begin{proof}
  Let $(v_0,v_1,\dots,v_d)$ be the tropical vertices of the
  $d$-polytrope $P$.  Up to a transformation we can assume that the
  basic type of $P$ is $(0,1,\dots,d)$.  Moreover, we assume that the
  coordinates are chosen such that $v_{ii}=0$ holds for all
  $i\in\{0,\dots,d\}$.  We have to show that there are no other facet
  defining tropical halfspaces for $P$.  By construction the
  \emph{cornered hull}
  \begin{equation}\label{eq:cornered_hull}
    (c_0+\bar S_0) \cap (c_1+\bar S_1) \cap \dots \cap (c_d+\bar S_d)
  \end{equation}
  of $P$ is the convex polyhedron subject to the $d(d+1)$ ordinary
  inequalities $x_i-x_k\ge c_{ik}$ for all $i\ne k$.   Equivalently,
  we have
  \begin{equation*}
    x_i-x_k\le -c_{ki} \quad \text{ for } i\ne k
  \end{equation*}
  Since the cornered hull is bounded it is a polytrope.  We can apply
  the procedure \eqref{eq:P:v} to get at the tropical vertices of the
  cornered hull.  These are exactly the points $v_0,\dots,v_d$, and
  the claim follows.
\end{proof}

\begin{remark}
  Since $c_k+\bar S_k$ contains all points in $V$ we have that
  $T_k=\{1,\dots,n\}$ where $T=(T_0,\dots,T_d)$ is the type of $c_k$
  with respect to $V$.  The $k$-th corner is the unique pseudo-vertex
  of $P$ with this property.
\end{remark}

\begin{remark}
  If $V\in\RR^{(d+1)\times(d+1)}$ is a matrix (with zero diagonal)
  whose rows correspond to the tropical vertices of a polytrope $P$
  then the rows of its negative transpose $-V^t$ yield the corners.
  It follows that the corners are the tropical vertices of $P$, seen
  as a max-tropical polytope.  The map $V\mapsto-V^t$ is an instance
  of the duality of tropical polytopes discussed in
  \cite[Theorem~23]{DevelinSturmfels04}.
\end{remark}

\goodbreak

\section{Constructions and examples}

\subsection{Associahedra}\label{sec:assoc}

Studying expansive motions Rote, Santos, and Streinu arrived at
interesting new realizations of the associahedra
\cite{RoteSantosStreinu03}, \S5.3.  They consider the polyhedron in
$\RR^n$ which is defined by
\begin{align}\label{eq:associahedron}
  x_j-x_i &  \ge  (i-j)^2 \qquad \text{for $1\le i<j\le n$} \notag \\
  x_1 &  =  0 \\
  x_n & = (n-1)^2 . \notag
\end{align}
This turns out to be an ordinary polytope which is combinatorially
equivalent to the $(n-2)$-dimensional associahedron, which is a
secondary polytope of a convex $(n+1)$-gon.  The $\tbinom{n}{2}-1$
inequalities $x_j-x_i\ge(i-j)^2$ for $(i,j)\ne(1,n)$ are all facet
defining.

If we project the polytope defined in \eqref{eq:associahedron}
orthogonally onto the subspace of $\RR^n$ spanned by the standard
basis vectors $e_2,e_3,\dots,e_{n-1}$ we obtain a full-dimensional
realization $\Assoc_{n-2}\subset\RR^{n-2}$ which is tropically convex
(via the identification from \eqref{eq:c_0}).  That is, $\Assoc_{n-2}$
is a polytrope.

We can apply the procedure from \eqref{eq:P:v} to determine the
tropical vertices of $\Assoc_{n-2}$.  Each tropical vertex will be
described by listing the $n-1$ ordinary facets containing it.  If the
ordinary facet $x_j-x_i = (i-j)^2$ from~\eqref{eq:associahedron} is
denoted as $(i,j)$ then the $j$-th tropical vertex of $\Assoc_{n-2}$,
where $1\le j\le n-1$, is the intersection of the facets
$(1,j),(2,j),\dots,(j-1,j),(j+1,n),(j+2,n),\dots,(n-1,n)$. For
example, the tropical vertices of $\Assoc_{3}$ are
\begin{equation*}
  \begin{array}{lcl}
    (2,5),(3,5),(4,5) & = & (7,12,15)   ,\\
    (1,2),(3,5),(4,5) & = & (1,12,15)   ,\\
    (1,3),(2,3),(4,5) & = & (3,4,15),  \quad  \text{and}\\
    (1,4),(2,4),(3,4) & = & (5,8,9)   .
  \end{array}
\end{equation*}
On the right hand side are the coordinates in $\RR^3$.  The polytrope
$\Assoc_{2}$ is an ordinary pentagon like in
Figure~\ref{fig:polytropes2d} (third).

\subsection{Polytropes with many pseudo-vertices}

We want to construct a class of polytropes which attain the upper
bound on the number of pseudo-vertices from
Proposition~\ref{prop:pseudo-vertex-upper-bound}.  This construction
is an explicit instance of what arises from the proof of
\cite{DevelinSturmfels04}, Proposition~19.  The following lemma says
that we can perturb the vertices of the pyrope $\Pi_d$
from~\eqref{eq:big-standard} quite a bit, and we still have a
polytrope.

\begin{lemma}\label{lem:perturb}
  For an arbitrary matrix
  $E=(\varepsilon_{ik})_{i,k}\in[0,\frac{1}{2})^{(d+1)\times(d+1)}$
  the tropical polytope
  \begin{equation*}
    \Pi_d^E  := \tconv(-e_0+\varepsilon_{0,\cdot},-e_1+\varepsilon_{1,\cdot},\dots,-e_d+\varepsilon_{d,\cdot})
  \end{equation*}
  is a polytrope.
\end{lemma}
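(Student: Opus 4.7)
The plan is to exhibit $\Pi_d^E$ as an intersection of ordinary halfspaces by showing that it coincides with its cornered hull. First I would normalize the generators so that the $i$-th one is a point $v_i\in\TA^d$ with $v_{ii}=0$: adding $1-\varepsilon_{ii}$ to every coordinate of $-e_i+\varepsilon_{i,\cdot}$ gives
\[
v_{ii}=0,\qquad v_{ik}=1+\varepsilon_{ik}-\varepsilon_{ii}\quad\text{for }k\ne i.
\]
The hypothesis $\varepsilon_{ij}\in[0,\tfrac12)$ pins every off-diagonal entry $v_{ik}$ into the open interval $(\tfrac12,\tfrac32)$.

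Next I would compute the $k$-th corner $c_k$ of $\tconv(V)$ entry by entry. For $j\ne k$ the three candidate minimizers in $(c_k)_j=\min_i(v_{ij}-v_{ik})$ fall into disjoint ranges: the choice $i=j$ yields $-v_{jk}\in(-\tfrac32,-\tfrac12)$, the choice $i=k$ yields $v_{kj}\in(\tfrac12,\tfrac32)$, and any $i\notin\{j,k\}$ yields $\varepsilon_{ij}-\varepsilon_{ik}\in(-\tfrac12,\tfrac12)$. The uniform bound $\varepsilon_{ij}<\tfrac12$ is precisely what forces the minimum to be attained at $i=j$, so $(c_k)_j=-v_{jk}$ for all $j\ne k$.

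With the corners in hand, the $d+1$ cornered tropical halfspaces translate into the $d(d+1)$ ordinary inequalities $x_k-x_j\le v_{jk}$ for $k\ne j$. This system is bounded and its facet normals are roots of type $\mathrm{A}_d$, so the cornered hull is an alcoved polytope, hence a polytrope. Applying Theorem~\ref{thm:main} to this inequality description, the formula \eqref{eq:P:v} produces tropical vertices $v_i'$ with $v_{ii}'=0$ and $v_{ik}'$ equal to the right-hand side of the inequality $x_k-x_i\le v_{ik}$, which is $v_{ik}$. Hence $v_i'=v_i$, and the cornered hull is $\tconv(V)$. Since $\tconv(V)=\Pi_d^E$ is always contained in its cornered hull, the two coincide, and $\Pi_d^E$ is ordinarily convex.

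The main obstacle is the corner calculation: the bound $\varepsilon_{ij}<\tfrac12$ is exactly the threshold that keeps the minimum in $(c_k)_j$ anchored at $i=j$. Without it, the minimizer could migrate to $i=k$ or to some $i\notin\{j,k\}$, the ordinary inequalities of the cornered hull would no longer match the entries of $V$, and $\tconv(V)$ would in general strictly contain the cornered hull and fail to be ordinarily convex.
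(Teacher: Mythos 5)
Your proof is correct, but it takes a genuinely different route from the paper's. The paper argues directly with types: it computes $\type_V(v_i)$ for each generator $v_i=-e_i+\varepsilon_{i,\cdot}$, uses the bound $\varepsilon_{jk}<\tfrac12$ to show that $v_j\in v_i+\bar S_k$ exactly when $j=k$, concludes that every generator lies in the closure of the basic cell of type $(0,1,\dots,d)$, and hence that the cell decomposition has a unique bounded cell. You instead compute the corners $c_k$ explicitly and show $\Pi_d^E$ equals its own cornered hull, which is bounded with A$_d$-root facet normals and therefore a polytrope. Both proofs pivot on the same numerical fact --- the bound $\varepsilon<\tfrac12$ forces a minimum to be attained at the ``expected'' index --- but the paper reads it off from sector membership, while you read it off from the corner formula $\min_i(v_{ij}-v_{ik})$. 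Your route is heavier, invoking the corner machinery and Theorem~\ref{thm:main} (both established earlier, so there is no circularity), whereas the paper's type computation is shorter and more self-contained; on the other hand, your argument produces the corners and the facet description of $\Pi_d^E$ for free.

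One step you should spell out: to apply formula~\eqref{eq:P:v} to the cornered hull you need the inequalities $x_k-x_j\le v_{jk}$ to be tight, equivalently that the constants satisfy the triangle inequality $v_{jk}\le v_{ik}+v_{ji}$. This does hold, since
\begin{equation*}
v_{ik}+v_{ji}-v_{jk}=1+\varepsilon_{ik}+\varepsilon_{ji}-\varepsilon_{ii}-\varepsilon_{jk}>1-\tfrac12-\tfrac12=0,
\end{equation*}
or alternatively because each inequality is achieved at the corresponding corner $c_k$, which belongs to the cornered hull. A sentence to this effect would close the small gap.
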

	
\begin{proof}
  A direct computation shows that the generators
  $-e_0+\varepsilon_{0,\cdot},\dots,-e_d+\varepsilon_{d,\cdot}$, in
  fact, are the tropical vertices of $\Pi_d^\varepsilon$.  For the
  rest of the proof we fix this particular vertex ordering.

  Observe that the type of the origin is $(0,1,\dots,d)$.  Now we
  compute the type $(T_0,T_1$, $\dots,T_d)$ of the vertex
  $-e_i+\varepsilon_{i,\cdot}$.  We claim that $T_k=\{i,k\}$ if $i\ne
  k$ and $T_i=\{i\}$.  Indeed, for $i\ne j$ we have
  $-e_j+\varepsilon_{j,\cdot}\in-e_i+\varepsilon_{i,\cdot}+\bar S_k$
  if and only if
  $e_i-e_j+\varepsilon_{j,\cdot}-\varepsilon_{i,\cdot}\in \bar S_k$ if
  and only if $j=k$ since $0\le
  \varepsilon_{ik},\varepsilon_{jk}<\frac{1}{2}$.

  From this we learn that each vertex is contained in the closure of
  the cell of type $(0,1,\dots,d)$, and hence there is only one
  bounded cell.
\end{proof}

For a random matrix $E$ Lemma~\ref{lem:perturb} would yield a
polytrope with the maximal number of vertices (almost surely).  The
following is a deterministic solution.

\begin{example}
  For any fixed positive $\varepsilon$ with $\varepsilon<\frac{1}{2}$
  let
  \begin{equation*}
    E  =  
    \begin{pmatrix}
      0 & \varepsilon & \varepsilon^2 & \dots & \varepsilon^{d-1} & \varepsilon^d \\
      \varepsilon^d & 0 & \varepsilon & \varepsilon^2 & \dots & \varepsilon^{d-1} \\
      \varepsilon^{d-1} & \varepsilon^d & 0 & \varepsilon & \dots & \varepsilon^{d-2} \\
      \vdots & \ddots & \ddots & \ddots & \ddots & \vdots \\
      \varepsilon^2 & \dots & \varepsilon^{d-1} & \varepsilon^d & 0 & \varepsilon \\
      \varepsilon & \varepsilon^2 & \dots & \varepsilon^{d-1} &
      \varepsilon^d & 0
    \end{pmatrix} .
  \end{equation*}
  Then the perturbed pyrope $\Pi_d^E$ is a polytrope with
  $\tbinom{2d}{d}$ pseudo-vertices, which is the upper bound from
  Proposition~\ref{prop:pseudo-vertex-upper-bound}.
\end{example}

There is only one tropical type of $2$-polytrope attaining the upper bound six
on the number of pseudo-vertices, shown in Figure~\ref{fig:polytropes2d}
(fourth).  Already in dimension $3$, however, there are five distinct types of
polytropes with $20$ vertices, which are also pairwise not combinatorially
equivalent as ordinary polytopes.  All of them are simple and share the same
$f$-vector $(20,30,12)$.  For each of the five types we give a
$4\times4$-matrix such that the tropical convex hull of the rows gives the
corresponding polytrope; these are also shown in Figure~\ref{fig:max3d}:
\begin{equation*}\scriptscriptstyle
  \begin{pmatrix}
    0 & 0 & 2 & 2 \\
    4 & 0 & 4 & 3 \\
    4 & 3 & 0 & 4 \\
    6 & 4 & 4 & 0
  \end{pmatrix}
  , \hfill
  \begin{pmatrix}
    0 & 2 & 2 & 2 \\
    4 & 0 & 4 & 2 \\
    4 & 3 & 0 & 4 \\
    6 & 6 & 4 & 0
  \end{pmatrix}
  , \hfill
  \begin{pmatrix}
    0 & 10 & 11 & 14 \\
    14 & 0 & 10 & 11 \\
    11 & 14 & 0 & 10 \\
    10 & 11 & 14 & 0
  \end{pmatrix}
  , \hfill
  \begin{pmatrix}
    0 & 1 & 2 & 2 \\
    8 & 0 & 8 & 7 \\
    10 & 6 & 0 & 8 \\
    6 & 5 & 4 & 0
  \end{pmatrix}
  , \hfill
  \begin{pmatrix}
    0 & 6 & 6 & 2 \\
    6 & 0 & 2 & 3 \\
    11 & 10 & 0 & 10 \\
    8 & 8 & 9 & 0
  \end{pmatrix}
\end{equation*}
\begin{figure}[htb]
  \hspace*{-2mm}
  \includegraphics[width=.19\textwidth]{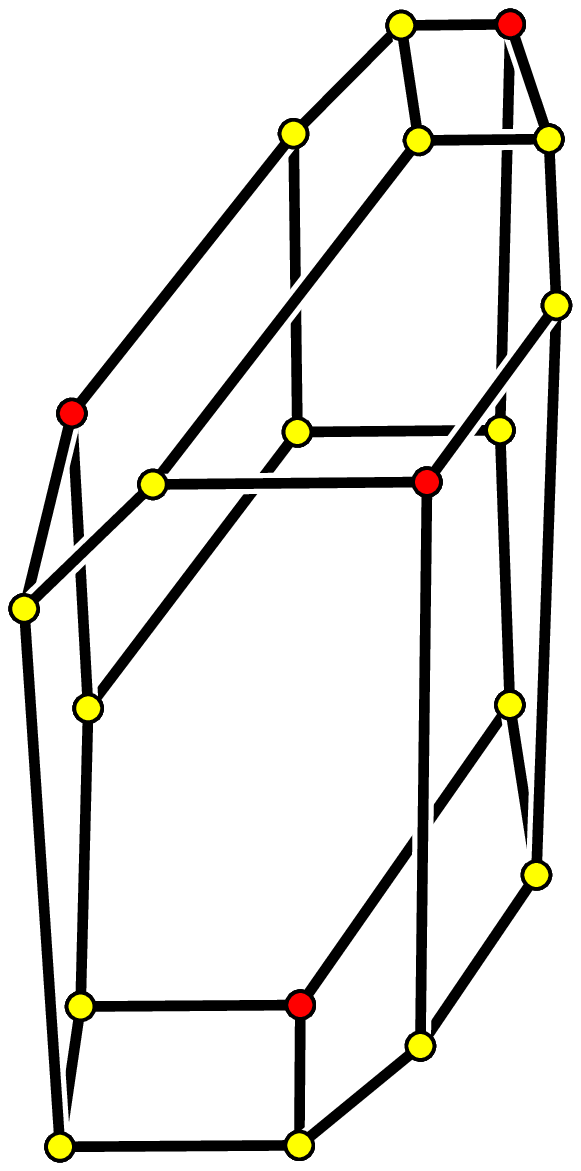}
  \hspace*{-3mm}
  \includegraphics[width=.19\textwidth]{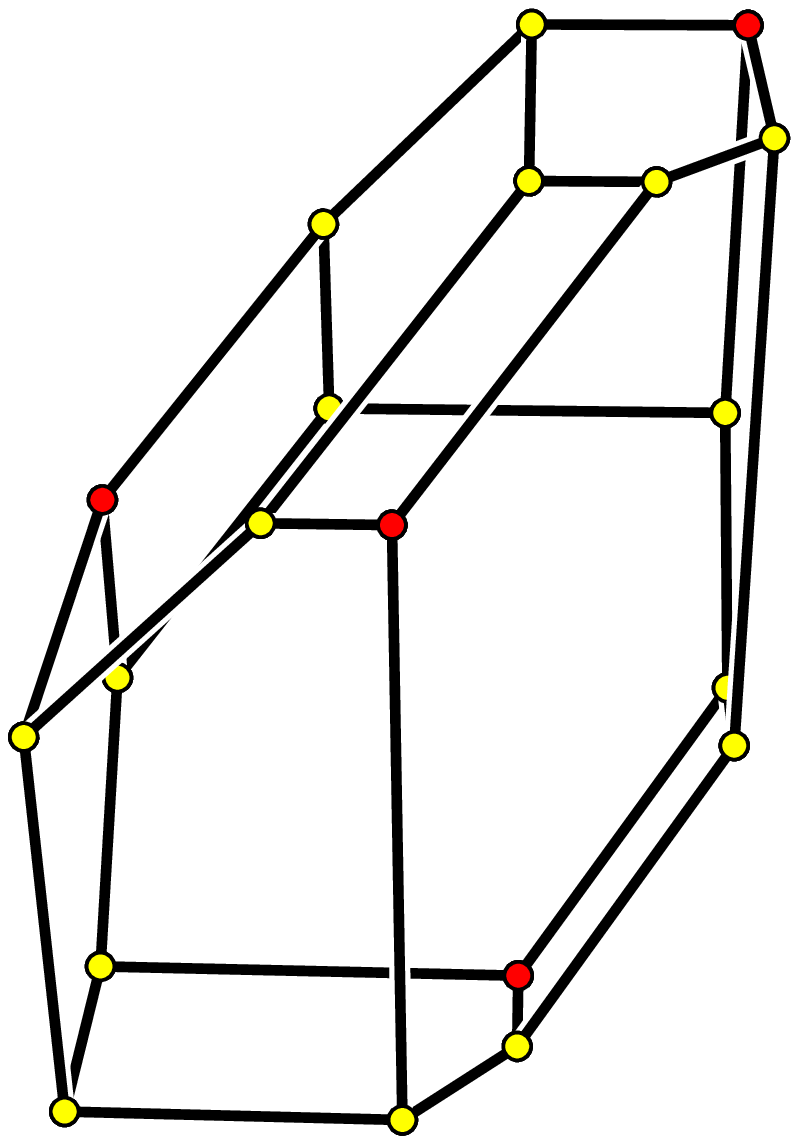}
  \hspace*{-0.5mm}
  \includegraphics[width=.19\textwidth]{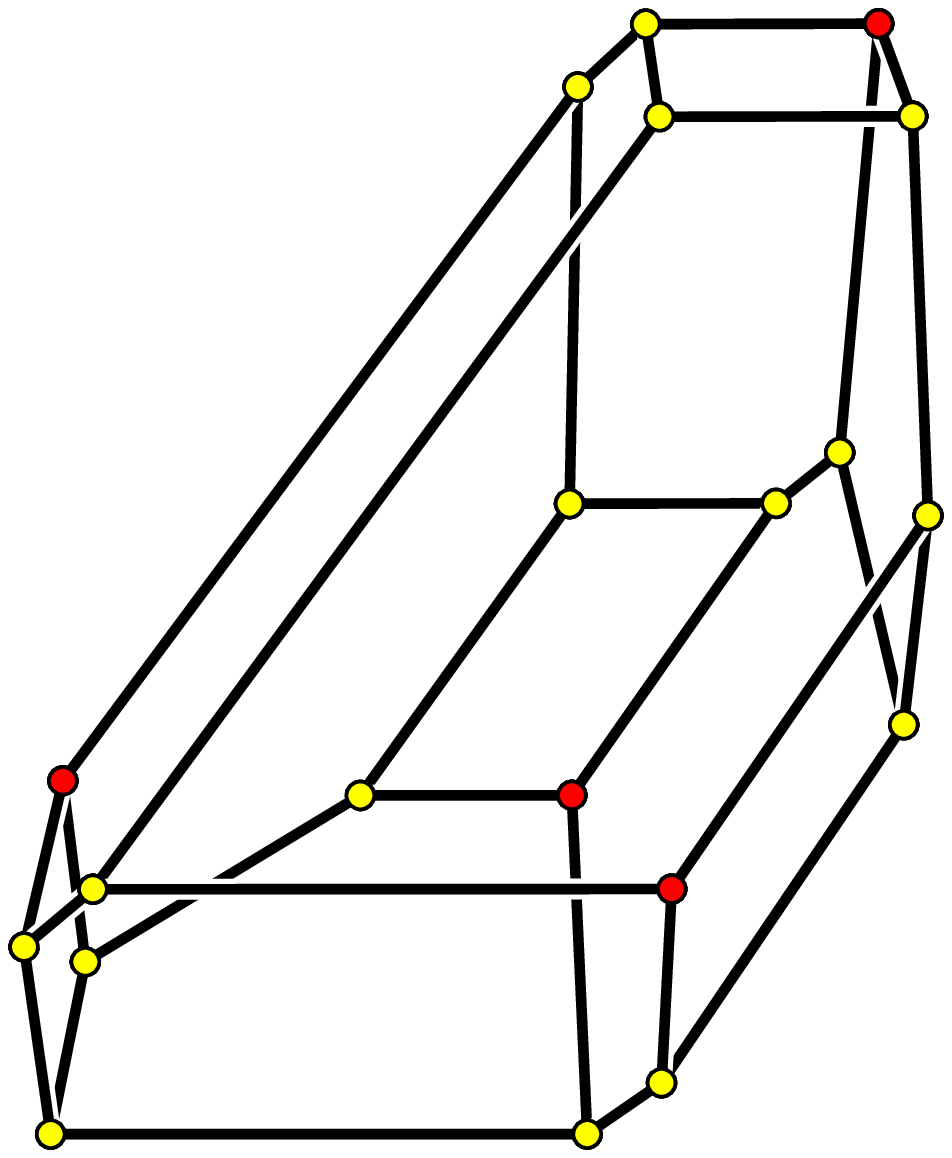}
  \hspace*{1.5mm}
  \includegraphics[width=.19\textwidth]{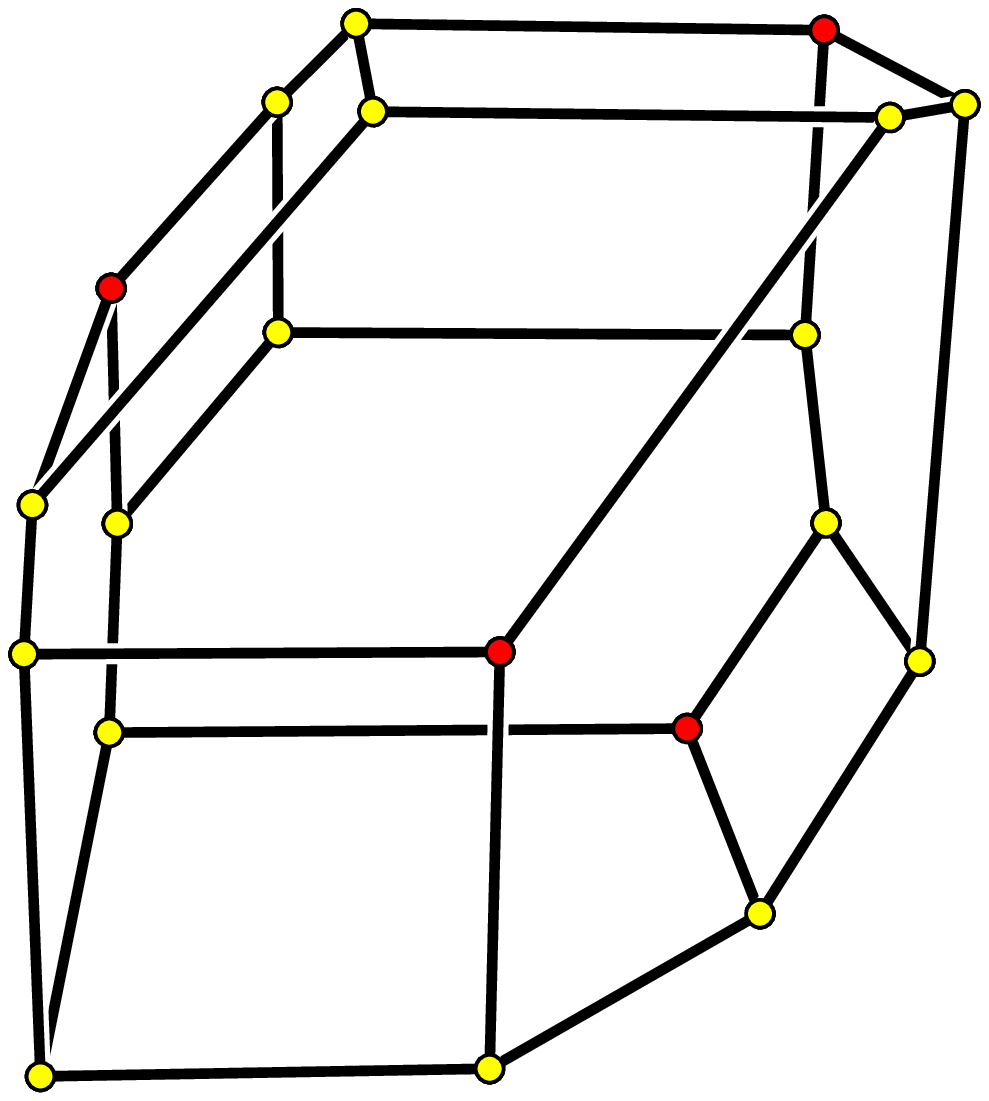}
  \hspace*{-.5mm}
  \includegraphics[width=.19\textwidth]{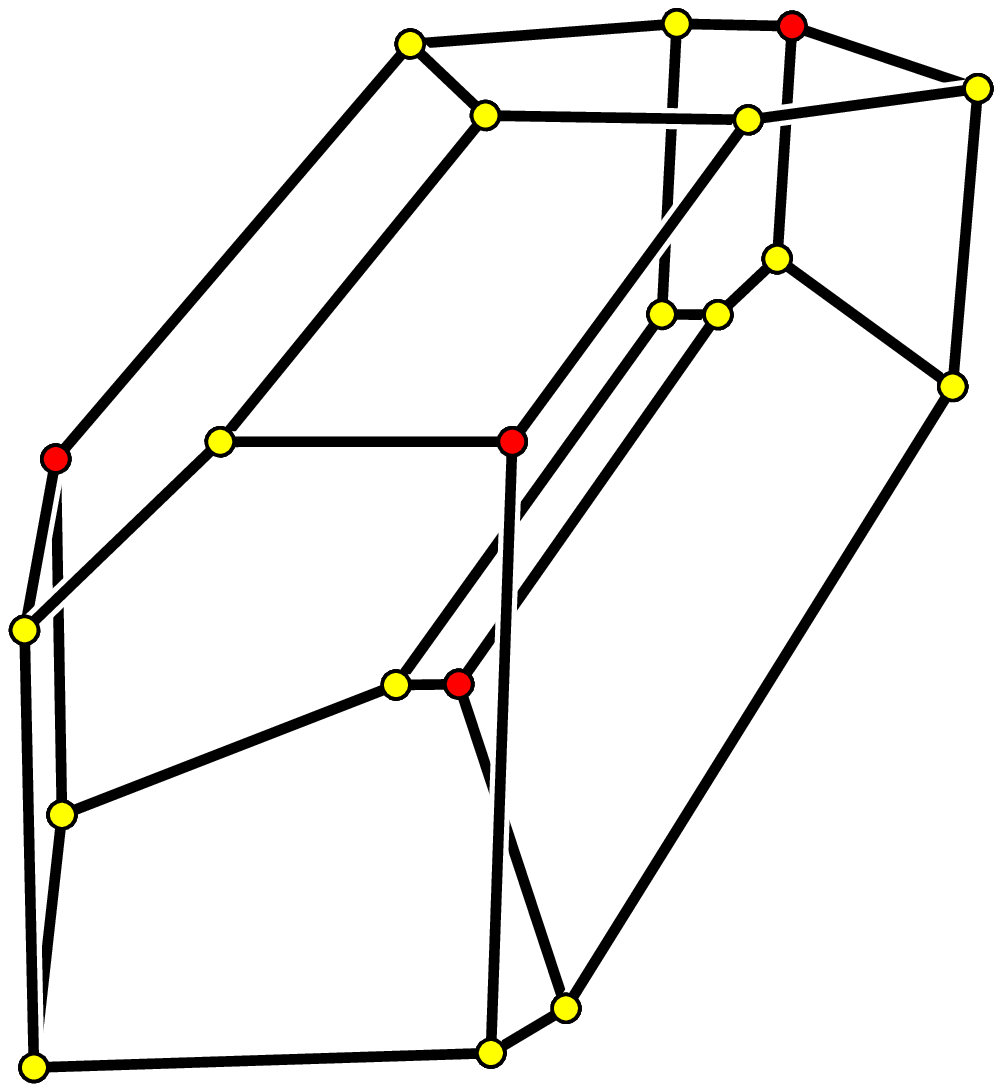}
  \caption{The five tropical types of $3$-polytropes with 20
    vertices.}
  \label{fig:max3d}
\end{figure}

\section{Enumerating all polytropes}
\label{sec:enum}

We want to explain how to enumerate all polytropes in $\TA^d$ for
fixed $d$.  Since their number of pseudo-vertices (and ordinary
facets) is bounded by
Propositions~\ref{prop:ordinary-facet-upper-bound}
and~\ref{prop:pseudo-vertex-upper-bound} it is clear that there are
only finitely many distinct tropical types.  Of course, in principal,
it is possible to enumerate all regular subdivisions of
$\Delta_d\times\Delta_d$ and to sort out those which are dual to a
polytrope; see \cite{Rambau:TOPCOM-ICMS:2002,PfeifleRambau03}.  But
this does not seem to be practically feasible even for $d=3$ due to
the sheer size of the secondary polytope of $\Delta_3\times\Delta_3$.
However, there is a more efficient approach which will be the subject
of the discussion now.  The efficiency will be underlined by being
able to achieve a complete classification of the tropical types of
$3$-polytropes; we think that even the $4$-dimensional case is within
reach.

In view of Remark~\ref{rem:lattice} we can restrict our attention to
enumerating \emph{lattice polytropes}, that is, polytropes whose
pseudo-vertices have integral coordinates.  Since the alcove
triangulation $\TA_\Delta^d$ induces a triangulation on any lattice
polytrope, and since the small tropical simplex from
\eqref{eq:small-standard} is a maximal face of $\TA_\Delta^d$ it
suffices to enumerate integral polytropes which contain the small
tropical simplex.  This means that we can obtain each (tropical type
of) polytrope by successively adding generators outside the small
tropical simplex.

Throughout the following we look at a $d$-polytrope
$P=\tconv(v_0,\dots,v_d)\subset\TA^d$, and we assume that the basic
type of $P$ is $(0,1,\dots,d)$, which is equivalent to requiring that
$\tdet(v_0,\dots,v_d)=v_{00}+\dots+v_{dd}$.  Our type computations
will be with respect to this ordering of the vertices of~$P$.

Let $(T^i_0,T^i_1,\dots,T^i_d)=\type_V(v_i)$.  Since $v_i$ is a
tropical vertex of $P$ we have $T^i_i=\{i\}$.  Moreover, as the basic
type is $(0,1,\dots,d)$ we have $k\in T^i_k$ for all $i,k$.  In this
situation the tropical halfspace $v_i+\bar{S}_i$ intersects $P$ only
in the vertex $v_i$.  The set $v_i+\bar{S}_i$ is always contained in
the normal cone of $v_i$ seen as a vertex of the ordinary polytope
$P$.

In Figure~\ref{pic:Regionsoutside} the light regions form the tropical
halfspaces $v_i+\bar{S}_i$.  For a new point $x$ the tropical polytope
$P(x):=\tconv\{v_0,\dots,v_d,x\}$ will be convex in the ordinary sense
or not, depending on the type of $x$.

\begin{figure}[htb]
  \centering
  \psfrag{S0}{$v_0+\bar{S}_0$} \psfrag{S1}{$v_1+\bar{S}_1$}
  \psfrag{S2}{$v_2+\bar{S}_2$} \psfrag{v0}{$v_0$} \psfrag{v1}{$v_1$}
  \psfrag{v2}{$v_2$}
  \includegraphics[scale=0.45]{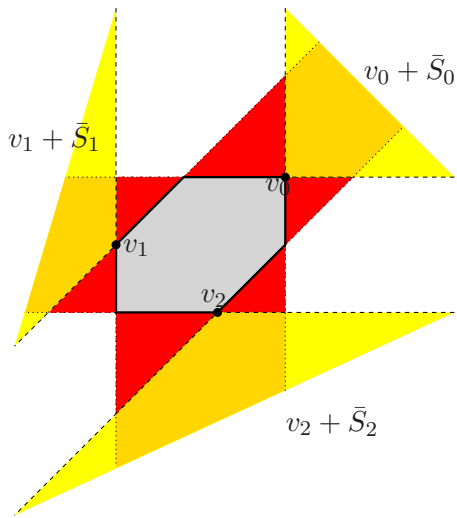}
  \caption{The light/yellow regions are the tropical halfspaces
    $v_i+\bar{S}_i$, the dark/red ones are the cells $X_{i,j}$ of type
    $(T_{i,j,0},T_{i,j,1},T_{i,j,2})$ as in \eqref{eq:T_ijk}.  See
    also Figure~\ref{fig:quad-hexagon-types} (right).}
  \label{pic:Regionsoutside}
\end{figure}

\begin{proposition}\label{prop:regions}
  Consider the union of cells $X_i=X_{i,0}\cup\dots\cup X_{i,i-1}\cup
  X_{i,i+1}\cup\dots\cup X_{i,d}$, where $X_{i,j}$ is the cell of type
  $(T_{i,j,0},T_{i,j,1},\dots,T_{i,j,d})$, and where
  \begin{equation}\label{eq:T_ijk}
    T_{i,j,k}  =     \begin{cases}
      \emptyset & \text{if $k=i$}   ,\\
      T^i_k\cup\{i\} & \text{if $k=j$}   ,\\
      T^i_k & \text{otherwise}   .
    \end{cases}
  \end{equation}
  Then the tropical polytope $P(x)=\tconv(v_0,\dots,v_d,x)$ is convex
  in the ordinary sense if and only if
  \begin{equation*}
    x   \in   \bigcup_{i=0}^d (\valid{i})   ,
  \end{equation*}
  where $\overline{X_i}$ is the topological closure of $X_i$.
  Moreover, in this case we have $P(x)\supseteq P$, so $v_i$ is
  redundant in $P(x)$.
\end{proposition}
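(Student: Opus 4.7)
My plan is to split Proposition~\ref{prop:regions} into the ``moreover'' (redundancy of $v_i$), the sufficiency direction, and the necessity direction, treating redundancy first and using it to collapse $P(x)$ to a tropical simplex on $d+1$ generators whose ordinary convexity I can then verify via the cornered halfspace machinery established just before.

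For redundancy and sufficiency, I would fix $i$ with $x\in\valid{i}$ and, after normalizing $v_{l,l}=0$ as in \eqref{eq:P:v} together with a representative of $x$ satisfying $x_i=v_{i,i}=0$ (available since $x-v_i\in\bar S_i$ amounts to $x_k\ge v_{i,k}$ for $k\ne i$ after this normalization), propose the tropical identity
\begin{equation*}
  v_i \;=\; (0\odot x)\;\oplus\;\bigoplus_{m\ne i}\bigl(v_{i,m}\odot v_m\bigr).
\end{equation*}
Verifying it coordinate by coordinate reduces to two easy facts: at position $i$ the $x$-term gives $v_{i,i}=0$ while each other $v_{i,m}+v_{m,i}$ is $\ge 0$ by consistency of the A$_d$-inequality system of $P$; at position $k\ne i$ the term $v_{i,k}\odot v_k$ gives exactly $v_{i,k}$, the fact $m\in T^i_m$ (from the basic type being $(0,1,\dots,d)$) yields $v_{i,m}+v_{m,k}\ge v_{i,k}$ for every other $m\ne i$, and $x_k\ge v_{i,k}$ dominates on the $x$-term. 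This proves the ``moreover'' clause and reduces $P(x)$ to $\tconv((V\setminus\{v_i\})\cup\{x\})$.

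To conclude that the reduced hull is ordinarily convex, I would compute its $d+1$ cornered tropical halfspaces and apply the two propositions on cornered halfspaces together with Theorem~\ref{thm:main}. The condition $T^i_i=\{i\}$ ensures that the minima defining the corners $c'_k$ for $k\ne i$ are never attained on the $v_i$-row, so replacing $v_i$ by $x$ leaves those $d$ corners unchanged; the type of $x$ inside $\overline{X_{i,j}}$ then pins down the new $i$-th corner exactly at the right location. Running the procedure \eqref{eq:P:v} on the resulting A$_d$-inequality system recovers precisely the generators $(V\setminus\{v_i\})\cup\{x\}$, so by Theorem~\ref{thm:main} the cornered hull is a polytrope that coincides with $P(x)$. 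For the converse I would argue contrapositively: if $x\notin\bigcup_i\valid{i}$ (and $x\notin P$, the interesting case) then the tropical identity above fails for every choice of $i$, because either $x\notin v_i+\bar S_i$ (no normalization with $x_i=v_{i,i}$ exists) or $x\notin\overline{X_i}$ (the type of $x$ violates one of the equalities $T_k=T^i_k$ or $T_i=\emptyset$ used in the identity). Hence no $v_l$ becomes redundant in $P(x)$, so $P(x)$ carries $d+2$ essential tropical vertices; by Theorem~\ref{thm:main} this prevents $P(x)$ from being a polytrope, and non-convexity can be made visible by exhibiting a pseudo-vertex of the ordinary hull of $V\cup\{x\}$ which is missing from the tropical hull.

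The main obstacle I anticipate is the middle step: showing cleanly that replacing $v_i$ by $x$ perturbs only the $i$-th cornered halfspace, and that it lands precisely at $x$ rather than somewhere strictly deeper in $v_i+\bar S_i$, requires a careful case analysis against the type definitions of $\overline{X_{i,j}}$, especially when $x$ sits on a common boundary $\overline{X_{i,j}}\cap\overline{X_{i,j'}}$ where two cornered inequalities become simultaneously tight. A Puiseux lift in the spirit of the proof of Lemma~\ref{lem:d+1-facets} would likely shortcut this verification by transferring the convexity check to an ordinary polytope in $K^d$.
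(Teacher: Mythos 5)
Your ``moreover'' argument via the tropical identity
$v_i = (0\odot x)\oplus\bigoplus_{m\ne i}(v_{i,m}\odot v_m)$
is correct and nicely explicit, but it actually exposes a fatal gap in your necessity direction. Checking the identity coordinate by coordinate uses only three ingredients: the consistency inequalities $v_{i,m}+v_{m,i}\ge 0$, the triangle inequalities $v_{i,m}+v_{m,k}\ge v_{i,k}$ (which come from $m\in T^i_m$, i.e.\ from the basic type being $(0,\dots,d)$), and $x_k\ge v_{i,k}$ for $k\ne i$, which is precisely $x\in v_i+\bar S_i$. Nothing in the verification uses $x\in\overline{X_i}$. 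Hence $v_i$ is redundant in $P(x)$ for \emph{every} $x\in v_i+\bar S_i$, not just for $x\in\valid{i}$. Consequently, in the subcase where $x\in v_i+\bar S_i$ but $x\notin\overline{X_i}$, your claim that ``the tropical identity above fails'' is false, $P(x)$ has only $d+1$ tropical vertices, and the appeal to Theorem~\ref{thm:main} (every polytrope is a tropical simplex) gives nothing: $P(x)$ is already a tropical simplex, yet the proposition says it must fail to be a polytrope. Your contrapositive argument therefore covers only the case $x\notin P\cup\bigcup_i(v_i+\bar S_i)$ and is silent on the crucial case $x\in(v_i+\bar S_i)\setminus\overline{X_i}$. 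The paper handles that case by a separate, direct geometric argument: since $x\notin\overline{X_i}$ there is some $j\ne i$ with $U_j=\emptyset$ in $\type_V(x)$, and then the ordinary midpoint $\tfrac12(x+v_j)$ of two points of $P(x)$ lies outside $P(x)$, so $P(x)$ is not convex in the ordinary sense. You need some such direct convexity violation; vertex counting cannot do it.

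For sufficiency you also take a genuinely different route from the paper. The paper shows directly that with respect to $V(x)=(x,v_1,\dots,v_d)$ the basic cell still has type $(0,1,\dots,d)$ and that $x$ lies on the closure of this cell, so $P(x)$ has a unique bounded full-dimensional cell and is thus a polytrope. Your plan via cornered halfspaces of the reduced generator set $V'=\{x\}\cup\{v_l:l\ne i\}$ can in principle work, but, as you yourself flag, the key step is unfinished: you would need to show that the cornered hull of $V'$ is bounded (hence a polytrope by the preceding propositions) \emph{and} that running the procedure \eqref{eq:P:v} on it returns exactly $V'$, so that the cornered hull and $\tconv(V')$ coincide. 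Your heuristic that ``replacing $v_i$ by $x$ leaves the other $d$ corners unchanged'' is not justified and does not obviously follow from $T^i_i=\{i\}$; the corners of $\tconv(V')$ are minima over the rows of $V'$ and may a priori shift in every coordinate. Until that case analysis (or a genuine Puiseux-lifting argument replacing it) is carried out, sufficiency is not established.
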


In order to give it a concise name we call the set $\valid{i}$ the
\emph{$i$-th valid region} with respect to $V$.

\begin{proof}
  First let us assume that $x\in X_{i,j}\cap(v_i+\bar{S}_i)$.  By
  symmetry we can assume that $i=0$.  This is to say that
  \begin{equation*}
    \type_V(x)  =   (\emptyset,T^0_1,T^0_2,\dots,T^0_{j-1},T^0_{j}\cup\{0\},T^0_{j+1},\dots,T^0_{d})
    .
  \end{equation*}
  As $x$ is contained in $v_0+\bar{S}_0$ it follows that
  $P(x)=\tconv(x,v_1,\dots,v_d)$.

  We have to show that $P(x)$ is convex in the ordinary sense.  To
  this end we fix a point $z$ in the basic cell of $P$.  Then
  $\type_V(z)=(0,1,\dots,d)$.  Since $x\in v_0+\bar{S}_0$, and since
  the other vertices remain the same we conclude that the type of $z$
  with respect to $V(x):=(x,v_1,\dots,v_d)$ is also $(0,1,\dots,d)$.
  Now we compute the type $(U_0,U_1,\dots,U_d)$ of $x$ with respect to
  $V(x)$.  Clearly, $0\in U_0$.  If we can show that for all
  $k\in\{1,\dots,d\}$ we have $k\in U_k$ then it follows that $x$ is
  in the boundary of the cell of type $(0,1,\dots,d)$, and hence
  $P(x)$ is a polytrope.

  So we assume that there is some $k\in\{1,\dots,d\}$ with $k\not\in
  U_k$. Since $v_0$ is the only point that is now missing in the
  sequence of generators we know that $U_k\supseteq
  T^0_k\setminus\{0\}$.  Actually, since $x$ is a tropical vertex of
  $P(x)$, we even have $U_k\supseteq T^0_k$.  By construction $j\in
  T^0_j\subseteq U_j$, and also $k\in T^0_k\subseteq U_k$ for
  $k\in\{1,2,\dots,d\}\setminus\{j\}$ because $v_0$ is contained in
  the boundary of the basic cell of~$P$.

  It remains to prove the converse: We have to show that if
  $x\notin\bigcup_{i=0}^d (X_i\cap v_i+\bar{S}_i)$ then $P(x)$ is not
  convex in the ordinary sense.  We distinguish two cases.  If
  $x\notin P\cup\bigcup_{i=0}^d (v_i+\bar{S}_i)$ then none of the
  generators of $P(x)$ is redundant, and, due to
  Theorem~\ref{thm:main}, the tropical polytope $P(x)$ cannot be a
  polytrope.

  Finally, let
  $x\in\bigcup_{i=0}^d(v_i+\bar{S}_i)\setminus\bigcup_{i=0}^d X_i$.
  Again, by symmetry we can assume that $x\in v_0+\bar{S}_0$.  As
  above $P(x)\supseteq P$.  Then if $(U_0,U_1,\dots,U_d):=\type_V(x)$
  there is some $j\ne 0$ such that $U_j=\emptyset$.  It can be shown
  that the point $y:=\frac{1}{2}(x+v_j)$ lies outside $P(x)$, whence
  $P(x)$ is not convex in the ordinary sense.
\end{proof}

With the aid of Proposition~\ref{prop:regions} we can enumerate all
tropical equivalence types of polytropes.  Consider a polytrope
$P=\tconv(v_0,\dots,v_d)$ in $\TA^{d}$ and its valid regions
$\valid{i}$.  Simultaneously choosing $d+1$ points $v_i'\in\valid{i}$
with $i\in\{0,\dots,d\}$ the tropical convex hull
$\tconv(v_0',\dots,v_d')$ is a polytrope because the valid regions
with respect to the old points $v_0,\dots,v_d$ are contained in the
valid regions of the new points $v_0',\dots,v_d'$.  Moreover, if the
types of $(v_0',\dots,v_d')$ are the same as $(v_0'',\dots,v_d'')$
then the resulting polytropes $\tconv(v_0',\dots,v_d')$ and
$\tconv(v_0'',\dots,v_d'')$ are tropically equivalent.

For our initial points $v_0,\dots,v_d$ we take the (tropical) vertices
of the small tropical $d$-simplex scaled by $d$.  The advantage of
this scaling is that each cell in the valid regions contains (at
least) one integral point.  The tropical convex hulls of $d+1$ such
points, one from each valid region, yield all the tropical types of
polytropes in $\TA^d$.  In order to enumerate all tropical equivalence
classes it suffices to consider one (integral) point per cell within
each valid region.

For an efficient procedure it is essential to take symmetries into
account.

We implemented this enumeration scheme in
\texttt{polymake}~\cite{polymake}, and the result of the computation
for $d=3$ is given in Table~\ref{tab:classification}.  Here $t_3(m)$
is the number of tropical equivalence classes of $3$-polytropes with
exactly $m$ pseudo-vertices, and $o_3(m)$ is the corresponding number
of combinatorial types of ordinary polytopes.  We necessarily have
$o_d(m)\le t_d(m)$ for all choices of $m$ and $d$.  From
Proposition~\ref{prop:pseudo-vertex-upper-bound} we know that the
maximum number of pseudo-vertices equals $\tbinom{6}{3}=20$.

\begin{table}[hbt]
  \caption{Tropical and ordinary equivalence classes of polytropes in $\TA^3$.}
  \renewcommand{\arraystretch}{0.9}
  \vspace*{.2cm}
  \centering
  \begin{tabular*}{.6\linewidth}{@{\extracolsep{\fill}}ccc@{}}
    \toprule
    $m$ & $t_3(m)$ & $o_3(m)$\\
    \midrule
    4&1&1\\
    5&1&1\\
    6&4&2\\
    7&3&3\\
    8&20&6\\
    9&14&6\\
    10&39&13\\
    11&43&14\\
    12&68&27\\
    13&54&22\\
    14&74&31\\
    15&53&30\\
    16&43&31\\
    17&21&20\\
    18&17&17\\
    19&8&8\\
    20&5&5\\
    \bottomrule
  \end{tabular*}
  \label{tab:classification}
\end{table}

The total numbers are $\sum_{m=4}^{20} t_3(m)=468$ and
$\sum_{m=4}^{20} o_3(m)=237$.  To locate some special examples in
Table~\ref{tab:classification} that occurred above: The (up to
tropical equivalence) unique $3$-polytrope with $4$ pseudo-vertices is
the small tropical tetrahedron. The $3$-pyrope $\Pi_3$ from
Figure~\ref{fig:zono} has $2^4-2=14$ pseudo-vertices; the
associahedron $\Assoc_3$ from Section~\ref{sec:assoc} also has $14$
pseudo-vertices, but it is not even combinatorially equivalent to
$\Pi_3$.  The five classes of $3$-polytropes with $20$ pseudo-vertices
are shown in Figure~\ref{fig:max3d}.

\section{Gorenstein simplicial complexes and Gorenstein polytopes}

From Theorem~\ref{thm:main} and \cite{DevelinSturmfels04},
Proposition~24, we know that $d$-polytropes in $\TA^d$, identified
with the tropical point configuration of their tropical vertices, are
dual to triangulations of the product of simplices
$\Delta_d\times\Delta_d$.  The purpose of this section is to view
these triangulations as abstract simplicial complexes and to
interpret them in terms of Commutative Algebra.  In particular, this
way we will obtain an alternate proof of Theorem~\ref{thm:main}.

A standard construction of new simplicial complexes from old ones is
iterative coning.  For the following it is crucial to determine if a
given simplicial complex has been obtained in such a way.  Let
$\Delta$ be an arbitrary simplicial complex on a finite vertex set
$V=\{x_1,\dots,x_n\}$. As usual we let
\begin{align*}
  \st_\Delta\sigma  &:=  \SetOf{\tau\in\Delta}{\sigma\cup\tau\in\Delta}   ,\\
  \lk_\Delta\sigma  &:=  \SetOf{\tau\in\st_\Delta\sigma}{\sigma\cap\tau=\emptyset}   ,\\
  \core V  &:=  \SetOf{v\in V}{\st_\Delta v\ne \Delta}, \quad  \text{and}\\
  \core\Delta &:= \Delta_{\core V} ,
\end{align*}
where $\Delta_W$ is the subcomplex of $\Delta$ induced on the vertices
$W\subseteq V$.  By construction $\Delta_{V \setminus \core V}$ is a
simplex, and $\Delta$ is the join of $\core V$ with $\Delta_{V
  \setminus \core V}$.

For a field $K$ let $K[\Delta]$ be the \emph{Stanley--Reisner} ring of
$\Delta$, that is, 
\begin{equation*}
K[\Delta]:=K[x_1,\dots,x_n]/I_\Delta 
\end{equation*}
where
$I_\Delta$ is the ideal generated by the monomials whose exponent
vectors correspond to characteristic functions of the (minimal)
non-faces of $\Delta$. A direct computation shows that
\begin{equation*}
  K[\Delta]  =  K[\core\Delta][x \mid x\in V\setminus\core V]   ,
\end{equation*}
that is, $K[\Delta]$ is the full polynomial ring with coefficients
$K[\core\Delta]$ and indeterminates indexed by $V\setminus\core V$.  A
simplicial complex $\Delta$ is called \emph{Gorenstein} if $K[\Delta]$
is a Gorenstein ring.  Further, a positively $\ZZ^d$-graded ring $R$
is \emph{Gorenstein} if it is Cohen--Macaulay, and the Matlis dual of
the top local cohomology is isomorphic to a $\ZZ^d$-graded translate
of $R$; see \cite{Stanley96}, Section~I.12.  More useful for our
purposes is the following characterization.

\begin{theorem}[Stanley \cite{Stanley96}, Theorem~5.1]\label{thm:Stanley:5.1} A simplicial
  complex $\Delta$ is Gorenstein (over a field $K$) if and only if
  for all $\sigma\in\core\Delta$ we have
  \begin{equation*}
    \tilde H_i(\Gamma;K)  =   \begin{cases}
      K & \text{if $i=\dim\Gamma$}\\
      0 & \text{otherwise,}
    \end{cases}
  \end{equation*}
  where $\Gamma=\lk_{\core\Delta}\sigma$.
\end{theorem}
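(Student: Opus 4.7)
The plan is to follow Stanley's approach via local cohomology and the canonical module, organized around Hochster's formula. First I would reduce to the case $\Delta = \core\Delta$: since $K[\Delta] = K[\core\Delta][x \mid x\in V\setminus\core V]$ is a polynomial extension, it is Gorenstein if and only if $K[\core\Delta]$ is, and the link condition in the statement is manifestly only about $\core\Delta$. So from now on I may assume $V = \core V$, i.e.\ every vertex $v$ satisfies $\st_\Delta v \ne \Delta$.

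Next I would pin down the two ingredients of the Gorenstein condition separately. A graded ring $R$ is Gorenstein exactly when it is Cohen--Macaulay and its canonical module $\omega_R$ is isomorphic to a graded translate $R(-a)$ of $R$. The Cohen--Macaulay half is Reisner's criterion, which gives the vanishing $\tilde H_i(\lk_\Delta\sigma;K)=0$ for $i<\dim\lk_\Delta\sigma$ and all $\sigma\in\Delta$. The remaining content of the theorem is therefore to show that, given Cohen--Macaulayness, the canonical module is principal if and only if the top reduced homology $\tilde H_{\dim\lk\sigma}(\lk\sigma;K)$ is one-dimensional for every $\sigma$ in the core.

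The key computational tool is Hochster's formula for the $\ZZ^n$-graded local cohomology of a Stanley--Reisner ring: for every squarefree vector $a\in\{0,1\}^n$ with support $\sigma\subseteq V$,
\begin{equation*}
  \dim_K H^i_\mathfrak{m}(K[\Delta])_{-a} \;=\; \dim_K \tilde H_{i-|\sigma|-1}\bigl(\lk_\Delta\sigma;K\bigr),
\end{equation*}
and all other graded pieces of $H^i_\mathfrak{m}$ vanish except in controlled negative degrees. Under Cohen--Macaulayness only $i = \dim\Delta + 1 = d+1$ matters, and Matlis duality converts this into an explicit description of the $\ZZ^n$-graded Hilbert function of $\omega_{K[\Delta]}$. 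The canonical module is then isomorphic to $K[\Delta](-a)$ for some $a$ precisely when $\omega_{K[\Delta]}$ is cyclic with a single generator in a pure degree, and reading off Hochster's formula this happens if and only if every $\dim_K \tilde H_{\dim\lk\sigma}(\lk\sigma;K) = 1$ while all lower homologies vanish, with the squarefree generator in degree $-\vones$ forcing $a = \vones$ (i.e.\ $\Delta$ is Eulerian of the right dimension).

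The main obstacle is the bookkeeping in this last step: one must combine the Reisner vanishing with a careful comparison of the $\ZZ^n$-graded Hilbert functions of $\omega_{K[\Delta]}$ and $K[\Delta](-\vones)$ across all squarefree degrees simultaneously, and argue that matching these dimensions for every face $\sigma$ is equivalent to the link condition. The forward direction is immediate once one knows $\omega_{K[\Delta]}\cong K[\Delta](-\vones)$; the converse requires showing that the homological hypothesis alone already forces the canonical module to be cyclic and generated in the correct degree, using that $\core\Delta$ has no cone points so that no ``free'' polynomial variables can shift the grading.
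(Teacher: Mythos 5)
The paper gives no proof of this statement; it is quoted as Theorem~5.1 of Stanley's \emph{Combinatorics and Commutative Algebra} \cite{Stanley96}, so there is no in-paper argument to compare against. Your outline does match the standard (indeed Stanley's own) route: reduce to $\core\Delta$ via the polynomial-extension observation, use Reisner's criterion for the Cohen--Macaulay half, then pass through Hochster's $\ZZ^n$-graded local cohomology formula and Matlis duality to decide whether $\omega_{K[\core\Delta]}$ is a graded translate of the ring. Be aware, though, that the step you flag as the ``main obstacle'' is the entire substance of the converse, not incidental bookkeeping: one must show that the homology hypothesis alone forces $\omega_{K[\core\Delta]}$ to be cyclic and concentrated in squarefree degree $\vones$, and this uses crucially both that $\dim_K\tilde H_{\dim\Gamma}(\Gamma;K)$ is exactly one (not merely nonzero) for every link $\Gamma$ in the core, and that the core has no cone points so the generator of $\omega$ cannot sit in a degree with a zero coordinate. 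As written, your proposal asserts this step rather than carrying it out, so it is a correct plan but not yet a complete proof.
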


Here $\tilde H_i(\Gamma;K)$ is the $i$-th reduced (simplicial)
homology of $\Gamma$ with coefficients in $K$.  The characterization
requires $\Gamma$ to have the same homology (with coefficients in $K$)
as the sphere of dimension $\dim\Gamma$.  The \emph{tight span} of a
triangulation is its dual cell complex.

\begin{proposition}\label{prop:Gorenstein}
  A regular triangulation of an ordinary polytope is Gorenstein
  (over an arbitrary field $K$) if and only if its tight span has
  a unique maximal cell.
\end{proposition}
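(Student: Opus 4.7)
The plan is to apply Stanley's criterion (Theorem~\ref{thm:Stanley:5.1}) and to translate both the Gorenstein property and the tight-span condition into the same geometric statement about the face $F := V \setminus \core V$ of $\Delta$.

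First I would make $F$ explicit. The condition $\st_\Delta v = \Delta$ holds precisely when $v$ belongs to every maximal simplex of $\Delta$, so $F$ is the intersection of all maximal simplices of $\Delta$ and is itself a face of $\Delta$. This yields the simplicial join $\Delta = F * \core\Delta$: every face of $\Delta$ splits uniquely as $\tau = \tau_F \cup \tau_c$ with $\tau_F \subseteq F$ and $\tau_c \in \core\Delta$, and every such union is again a face since $F$ lies in every maximal simplex. Two consequences to record are $\lk_\Delta F = \core\Delta$ and, for any $\sigma \in \core\Delta$, $\lk_{\core\Delta}\sigma = \lk_\Delta(F\cup\sigma)$.

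Next I would apply Stanley's criterion. Via the identity above it becomes the requirement that $\lk_\Delta(F\cup\sigma)$ is a $K$-homology sphere for every $\sigma \in \core\Delta$, including $\sigma = \emptyset$. Because the regular triangulation $\Delta$ is a PL $d$-ball, the link of a face of $\Delta$ is a PL-sphere when the face lies interior to $P$ and a contractible PL-ball when the face is contained in $\partial P$; the degenerate case $F = \emptyset$ returns $\core\Delta = \Delta$, itself a $d$-ball. Hence the Gorenstein condition forces $F$ to be a non-empty interior face of $\Delta$. Conversely, once $F$ is such, $\lk_\Delta F = \core\Delta$ is a PL-sphere and every further link $\lk_{\core\Delta}\sigma$ is again a sphere (link of a face in a sphere), so Stanley's criterion holds.

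To finish, I would match the same geometric condition with unique maximality in the tight span. Using the join structure once more, a face $\tau = \tau_F \cup \tau_c$ of $\Delta$ satisfies $\lk_\Delta \tau = (F \setminus \tau_F) * \lk_{\core\Delta}\tau_c$, which for $\tau_F \subsetneq F$ is a join involving the non-empty simplex $F \setminus \tau_F$ and is therefore contractible rather than a homology sphere; so the interior faces of $\Delta$ are precisely those containing $F$. The cells of the tight span are dual to the interior faces of $\Delta$ with dimensions reversed, so a unique maximal cell corresponds to a unique minimum interior face, which must be contained in every maximal simplex and hence in $F$; comparing with the above shows the tight span has a unique maximal cell exactly when $F$ is a non-empty interior face of $\Delta$, the same condition characterizing Gorenstein-ness. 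The main obstacle I foresee is the passage between Stanley's homological sphere condition and the geometric notion of an interior face; this is precisely where the regularity hypothesis enters decisively, since it makes $\Delta$ a PL-ball, so each face-link is cleanly either a sphere or a ball.
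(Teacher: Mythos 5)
Your proof is correct and takes essentially the same approach as the paper: identify $F = V\setminus\core V$ as the intersection of all maximal simplices, use the join $\Delta = F * \core\Delta$ to translate Stanley's criterion into the condition that $F$ is a non-empty interior face, and match this with unique maximality in the tight span via the dual-complex correspondence. The main organizational difference is that you make the intermediate geometric criterion (\emph{$F$ is a non-empty interior face}) explicit as a common pivot for both equivalences, and you are somewhat more careful than the paper in flagging where the PL structure of a regular triangulation enters (so that face-links are cleanly PL-spheres or PL-balls rather than merely homology spheres); the paper's phrasing ``the link of an interior face in a triangulated manifold \dots is a simplicial sphere'' tacitly relies on the same fact. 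One phrase in your write-up is slightly overstated --- ``the interior faces of $\Delta$ are precisely those containing $F$'' only holds as a biconditional once $F$ itself is known to be interior (the link computation gives only the forward implication ``interior $\Rightarrow$ contains $F$'') --- but you only invoke the direction that is valid unconditionally, so the argument goes through.
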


\begin{proof}
  Let $\Delta$ be a regular triangulation of an ordinary polytope $P$.
  First suppose that the tight span $\Delta^*$ consists of a single
  maximal cell. Hence there is a simplex $\sigma\in\Delta$ in the
  interior of $P$ which is contained in each maximal simplex
  of~$\Delta$. The vertices of $\sigma$ are precisely the cone points
  of $\Delta$, and $\Delta$ is the join of $\sigma$ with
  $\lk_\Delta\sigma=\core\Delta$.  The link of an interior face in a
  triangulated manifold (with or without boundary) is a simplicial
  sphere.  By Theorem~\ref{thm:Stanley:5.1} it follows that $\Delta$
  is Gorenstein.

  Conversely, let $\Delta$ be Gorenstein. Again, by
  Theorem~\ref{thm:Stanley:5.1}, we know that
  \begin{equation*}
    \Delta=\Delta_{V\setminus\core V}*\core\Delta, 
  \end{equation*}
  where $V$ is the vertex set of $P$ (and $\Delta$), $V\setminus\core
  V\ne\emptyset$, and $\core\Delta$ is an orientable pseudomanifold.
  Then $\Delta_{V\setminus\core V}$ is an interior simplex contained
  in all maximal simplices of $\Delta$, and hence
  $\Delta_{V\setminus\core V}$ corresponds to the unique maximal cell
  of $\Delta^*$.
\end{proof}

Let $P$ be an ordinary lattice $d$-polytope embedded into the affine
hyperplane $\RR^d\times\{1\}$ of $\RR^{d+1}$.  Then $M(P)=\pos
P\cap\ZZ^{d+1}$ is the set of lattice points in the positive cone
spanned by $P$ in $\RR^{d+1}$. Now $P$ is a \emph{Gorenstein polytope}
if there exists $u \in \interior M(P)$ such that
\begin{equation}\label{eq:Gorenstein}
  \interior M(P) = u + M(P)   ,
\end{equation}
see Bruns and Herzog \cite{BrunsHerzog93}, Chapter~6.  Here $\interior
M(P)=(\pos P\setminus\partial(\pos P))\cap\ZZ^{d+1}$ denotes the set
of interior lattice points of $M(P)$.  Gorenstein polytopes and their
Gorenstein triangulations are related as follows; see also Conca,
Ho{\c{s}}ten, and Thomas~\cite{ConcaHostenThomas06}.

\begin{theorem}[Bruns and R\"omer \cite{BrunsRoemer07}, Corollary~8]\label{thm:BrunsRoemer}
  Let $P$ be an ordinary lattice $d$-poly\-tope with some regular and
  unimodular triangulation using all the lattice points in $P$. Then
  $P$ is a Gorenstein polytope if and only if it has some regular
  triangulation which is Gorenstein.
\end{theorem}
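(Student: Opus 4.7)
The plan is to pass between the toric ring $K[M(P)]$ (whose Gorenstein property is exactly the condition~\eqref{eq:Gorenstein}, via the Danilov--Stanley description of its canonical module as the ideal generated by the interior lattice points) and the Stanley--Reisner rings of regular triangulations of $P$ (whose Gorenstein property is the link condition of Theorem~\ref{thm:Stanley:5.1}, equivalently, by Proposition~\ref{prop:Gorenstein}, the existence of a unique maximal cell in the tight span). The bridge between these two worlds is the Gr\"obner degeneration that turns the toric ideal defining $K[M(P)]$ into the initial monomial ideal corresponding to a regular triangulation.

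The hypothesis that $P$ admits a regular unimodular triangulation $\Delta_0$ using every lattice point of $P$ is what sets this bridge up cleanly: by Sturmfels' theorem, $K[M(P)]$ flatly degenerates to the squarefree quotient $K[\Delta_0]$, both fibres are Cohen--Macaulay (Hochster's theorem for the normal semigroup ring, Reisner's criterion applied to the shellable regular triangulation $\Delta_0$), and they share a common graded Hilbert series. Since the Gorenstein property of a Cohen--Macaulay positively graded ring is detected by palindromicity of the $h$-vector, or equivalently by the rank-one freeness of the canonical module, and the canonical module deforms flatly, Gorensteinness is preserved along this family. This immediately yields the direction ``$P$ Gorenstein $\Rightarrow$ $\Delta_0$ is a regular Gorenstein triangulation.''

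For the converse, given a regular Gorenstein triangulation $\Delta'$, I would compare the $h$-vector of $\Delta'$ with the $h^*$-vector of $P$ computed via the unimodular $\Delta_0$. The palindromicity of $h(\Delta')$ forced by Stanley's characterization, combined with the fact that $\Delta_0$ and $\Delta'$ both triangulate the same polytope $P$, should propagate to palindromicity of $h^*(P)$ and hence to Gorensteinness of $P$, i.e.~\eqref{eq:Gorenstein}. The main obstacle is precisely this step: when $\Delta'$ is not unimodular the initial ideal from its degeneration is not literally $I_{\Delta'}$ but only has it as radical, so one has to track multiplicities carefully, argue through the canonical module, and verify that Gorensteinness really transfers to $K[M(P)]$. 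Once that deformation-theoretic bookkeeping is handled, the combinatorial translations at both ends are formal, using Proposition~\ref{prop:Gorenstein} on the simplicial-complex side and Danilov--Stanley on the polytopal side.
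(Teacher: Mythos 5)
The paper does not prove Theorem~\ref{thm:BrunsRoemer}; it is quoted verbatim from Bruns and R\"omer and used as a black box. So there is no paper proof to compare against. Since you offer a proof sketch, though, it is worth flagging where it goes wrong.

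The central problem is a directional error in the flat-deformation step. You invoke Sturmfels' correspondence to produce a flat family degenerating $K[M(P)]$ to $K[\Delta_0]$, observe that both fibres are Cohen--Macaulay with the same Hilbert series, and then assert that ``the canonical module deforms flatly, Gorensteinness is preserved along this family,'' deducing that $P$ Gorenstein implies $\Delta_0$ Gorenstein. But Gorensteinness is an \emph{open} condition in a flat family of Cohen--Macaulay rings: if the \emph{special} fibre $K[\Delta_0]$ is Gorenstein then so is the generic fibre $K[M(P)]$, not the other way around. (Equivalently, the minimal number of generators of the canonical module is upper semicontinuous under specialisation, so a cyclic $\omega_{K[M(P)]}$ does not force $\omega_{K[\Delta_0]}$ to be cyclic.) What you get ``immediately'' is therefore only the easy implication from a Gorenstein unimodular triangulation to a Gorenstein polytope; the implication you claim is the genuinely hard direction of Bruns--R\"omer's corollary.

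Your fall-back argument, that ``the Gorenstein property of a Cohen--Macaulay positively graded ring is detected by palindromicity of the $h$-vector,'' is also not a theorem at the level of generality you use it. Stanley's result that a symmetric $h$-vector characterises Gorensteinness applies to Cohen--Macaulay \emph{domains}; it gives $h^*(P)$ symmetric $\iff K[M(P)]$ Gorenstein, which is fine on the polytope side. But $K[\Delta_0]$ is a Stanley--Reisner ring, never a domain when $\Delta_0$ has more than one facet, and for such rings symmetry of the $h$-vector is necessary but not sufficient for Gorensteinness. To close the gap you actually need the structural fact, special to (shellable) triangulated balls, that a symmetric $h$-vector forces the ball to be a simplex joined with a sphere---i.e.\ forces the cone structure detected by Proposition~\ref{prop:Gorenstein}. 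This is precisely the technical content of Bruns--R\"omer's argument (via a Dehn--Sommerville-type relation between $h(\Delta_0)$ and $h(\partial\Delta_0)$), and without isolating and proving that lemma the forward direction does not follow. You do correctly flag the other gap (passing through a non-unimodular $\Delta'$ in the converse direction), but the part you present as immediate is in fact where the real work lies.
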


Now there is the following well-known result; for far generalizations
see Goto and Watanabe~\cite{GotoWatanabe78}, Theorem~4.4.7.  For the
sake of completeness we give a simple proof.

\begin{theorem}\label{thm:main:var}
  The product of simplices $\Delta_m\times\Delta_n$ is a Gorenstein
  polytope if and only if $m=n$.
\end{theorem}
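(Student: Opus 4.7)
My plan is to unpack the Gorenstein condition \eqref{eq:Gorenstein} directly for $P = \Delta_m \times \Delta_n$, working entirely with the semigroup $M(P)$. First I would parameterize: writing the vertices of $P$ as $(e_i, e_j)$ with $0 \le i \le m$, $0 \le j \le n$, the cone $\pos(P \times \{1\})$ is generated by the vectors $(e_i, e_j, 1)$, and a short contingency-table argument identifies
\[ M(P) = \SetOfbig{(\alpha, \beta, k) \in \ZZ^{m+1}_{\ge 0} \times \ZZ^{n+1}_{\ge 0} \times \ZZ_{\ge 0}}{|\alpha| = |\beta| = k}, \]
where $|\alpha| := \sum_i \alpha_i$. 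The facet inequalities of this cone are $\alpha_i \ge 0$ and $\beta_j \ge 0$, while $|\alpha| = |\beta| = k$ are linear identities; hence $\interior M(P)$ is cut out by $\alpha_i, \beta_j \ge 1$.

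For the forward direction, suppose $u = (\alpha^u, \beta^u, k^u)$ witnesses \eqref{eq:Gorenstein}. Membership in $\interior M(P)$ gives $\alpha^u_i, \beta^u_j \ge 1$. Conversely, $\interior M(P) \subseteq u + M(P)$ demands that for each interior lattice point $w = (w^a, w^b, w^k)$ the differences $w^a_i - \alpha^u_i$ and $w^b_j - \beta^u_j$ are nonnegative. The key step is to exhibit, for each fixed index $i_0$, an interior lattice point with $w^a_{i_0} = 1$: take $k = \max(m+1, n+1)$, set $w^a_{i_0} = 1$, and distribute the excess among the remaining $\alpha$-coordinates so each is at least $1$, and likewise for $\beta$. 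This forces $\alpha^u_{i_0} \le 1$, hence $\alpha^u_{i_0} = 1$; symmetrically $\beta^u_j = 1$. Then $k^u = |\alpha^u| = m+1$ and $k^u = |\beta^u| = n+1$ force $m = n$.

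For the converse, when $m = n$ I would take $u = (\mathbf{1}_{m+1}, \mathbf{1}_{n+1}, m+1)$ and verify \eqref{eq:Gorenstein} directly. Membership $u \in \interior M(P)$ is clear; shifting any $(\alpha, \beta, k) \in M(P)$ by $u$ preserves the sum constraint and raises every $\alpha_i$, $\beta_j$ to at least $1$; conversely every $(\alpha', \beta', k') \in \interior M(P)$ yields $(\alpha' - \mathbf{1}, \beta' - \mathbf{1}, k' - (m+1)) \in M(P)$ since the differences are nonnegative (by the interior condition) and the sum constraints still match precisely because $m = n$.

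The main obstacle is the explicit identification of $M(P)$ and its relative interior; once these are in hand, the rest is short bookkeeping. The one subtle point is that pinning down each coordinate of $\alpha^u$ and $\beta^u$ to exactly $1$ requires the existence of a suitable minimal interior point in each direction, which is the content of the "place the excess on another coordinate" construction and implicitly uses that both factors of the product have dimension at least one.
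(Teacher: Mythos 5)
Your proof is correct and takes essentially the same approach as the paper: both unpack the criterion \eqref{eq:Gorenstein} by examining interior lattice points of the cone over $\Delta_m\times\Delta_n$, and both arrive at $m=n$ by comparing the degrees $m+1$ and $n+1$ attached to the all-ones Gorenstein witness. The paper's version is terser---it invokes the uniqueness of the interior lattice point of $k(\Delta_m\times\Delta_n)$ at the minimal scaling $k$---while yours spells out the parameterization of $M(P)$ and the construction of the interior points forcing each coordinate of $u$ to equal $1$; you are also right that both arguments tacitly need $m,n\ge 1$.
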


\begin{proof}
  The simplex $\Delta_n=\conv(0,e_1,\dots,e_n)$ is a Gorenstein
  polytope by the criterion \eqref{eq:Gorenstein}, since the scaled
  simplex $k\Delta_n$ contains precisely one interior lattice point,
  namely $e_1+\dots+e_n$, if $k=n+1$.  This yields that
  $k(\Delta_m\times\Delta_n)$ contains exactly one interior lattice
  point if and only if $m=n=k-1$.  The claim now follows from
  \eqref{eq:Gorenstein}.
\end{proof}

The ring $K[\Delta_n]$ is isomorphic to the full polynomial ring in
$n+1$ indeterminates with coefficients in $K$.  The ring
$K[\Delta_m\times\Delta_n]$ is isomorphic to the \emph{Segre product}
of polynomial rings (with their natural gradings). Therefore,
Theorem~\ref{thm:main:var} translates into the language of Commutative
Algebra as follows: The Segre product of $K[x_0,\dots,x_m]$ and
$K[x_0,\dots,x_n]$ (with their natural gradings) is Gorenstein if and
only if $m=n$.

The point of this section is that this can be used to give an
alternate proof of our main result.

\begin{proof}[Alternate proof of Theorem~\ref{thm:main}]
  Let $P$ be a $d$-polytrope in $\TA^d$ with tropical vertices
  $v_1$, $\dots,v_n$.  We have to show that $n=d+1$.

  Now $P$ coincides with the tight span of the regular triangulation
  of $\Delta_{n-1}\times\Delta_d$ dual to the point configuration
  $(v_1,\dots,v_n)$.  In particular, this triangulation is a
  Gorenstein simplicial complex by Proposition~\ref{prop:Gorenstein}.
  So $\Delta_{n-1}\times\Delta_d$ is an ordinary polytope with a
  Gorenstein triangulation.  Since products of simplices do admit a
  regular and unimodular triangulation, for instance, the staircase
  triangulation, the result of Bruns and R\"omer,
  Theorem~\ref{thm:BrunsRoemer}, can be applied.  We derive that
  $\Delta_{n-1}\times\Delta_d$ is a Gorenstein polytope and hence
  $n=d+1$ by Theorem~\ref{thm:main:var}.
\end{proof}

It is worth to mention that Theorem~\ref{thm:BrunsRoemer} can be read
both ways.  This means that, by reversing the argument above,
Theorem~\ref{thm:main:var} is, in fact, equivalent to
Theorem~\ref{thm:main}.

\bibliographystyle{amsplain}
\bibliography{main}

\end{document}